\newcommand{\der}{\operatorname d\hspace{-0.1em}}
\newcommand\R{\mathbb R}
\newtheorem{theorem}{Theorem}[section]
\newtheorem{corollary}[theorem]{Corollary}
\newtheorem{lemma}[theorem]{Lemma}
\newtheorem{proposition}[theorem]{Proposition}
\newtheorem*{corollary*}{Corollary \ref{intro}}
 \theoremstyle{definition}
\newtheorem{definition}[theorem]{Definition}
 \theoremstyle{remark}
\newtheorem{remark}[theorem]{Remark}
\newtheorem*{notation*}{Notation}
\numberwithin{equation}{section}
\begin{document}

\title{Removable Singularities of  $m$-Hessian Equations}

\author{H\"ulya Car}
\address{Universit\"at Ulm, Helmholtzstrasse 18, 89081 Ulm, Germany}
\email{huelya.arslan@uni-ulm.de}

\author{Ren\'e Pr\"opper}
\email{proepper@zedat.fu-berlin.de}
\thanks{}

\subjclass[2010]{Primary 35J60; 35J96, 35J25}
\keywords{Removable singularities, Hessian equations, Nonlinear elliptic PDE, Hopf lemma}

\date{}

\begin{abstract}
In this paper we give a new, less restrictive condition for removability of singular sets, $E$, of smooth solutions to  the m-Hessian equation (and also for more general fully nonlinear elliptic equations)  in $\Omega \setminus E$, $\Omega \subset \mathbb R^n$. Besides the existence and regularity results for these equations, the proof only makes use of the classical elliptic theory, i.e. the classical maximum principles and a Hopf lemma.
\end{abstract}

\maketitle

\section{Introduction} \label{introduction}
We are primarily concerned with the question of removability of  singular sets of smooth solutions for elliptic $m$-Hessian equations which are of the form  
\begin{equation} \label{nonsing}
               F_m(D^2 u)=f(x) \text{ in } \Omega,  \quad u=\varphi \text{ on } \partial \Omega,
              \end{equation}
or in the setting with singularities
\begin{equation} \label{Hessian}
     F_m(D^2u) =f(x)  \quad \text{ in } \Omega\setminus E, \quad u=\varphi \text{ on } \partial \Omega,
\end{equation}
where $m\le n$, $\Omega$ is a domain of $\mathbb R^n$, $f$ a sufficient smooth, positive function in $\Omega$, $\varphi$ a sufficient smooth function on $\partial \Omega$, and $E$ the singular set of the solution $u$. Of course, we have to impose certain conditions on the singular set $E$ and also on the solution $u$ in order to ensure removability of $E$. We describe these in detail in Section \ref{removable}.\\
The  $m$-Hessian operator $F_{m}$ is defined by
\[
F_{m}(D^{2}u)=:S_{m}(\lambda)^{1/m}:=\big (\sum\nolimits_{1\leq i_1 < \dots < i_m \leq n} \lambda_{i_1} \cdots \lambda_{i_m} \big )^{1/m},
\]
where $\lambda$ stands for the vector of eigenvalues $\lambda_{1},\dots,\lambda_{n}$ of the Hessian matrix, $D^{2}u$. They are special cases of equations of Hessian type, as considered e.g. in \cite{CNS}. \\
For $m=1$ we get the Laplace operator, but for $m\ge 2$ the $m$-Hessian equation becomes a fully nonlinear partial differential equation which is elliptic if $u$ is $m$-admissible, i.e. $S_{k}(\lambda(D^{2}u)) >0$ for each $k=1,\dots,m$.  A very important special case is the Monge-Amp\`ere equation, i.e. $m=n$, see e.g. \cite{TW2}.\\
The necessary definitions and results about $m$-Hessian equations will be given in the next section, see e.g. \cite{CNS},  \cite{W} and \cite{U}.\\
Before stating our contribution to the question of removability, we briefly want to mention the history of the problem.
In \cite{J} (1955) K. J\"orgens  proved that an isolated singular point, $p$, of a $C^2$-solution, $u$, of the Monge-Amp\`ere equation $\det (D^2 u)=1$ in two dimensions is removable if $u$ is continuously differentiable along one line through $p$. Later in \cite{SW} (1995)  F. Schulz and L. Wang generalized this to higher dimensions under the same constraints using classical elliptic theory, and R. Beyerstedt in \cite{B2} (1995) to more general elliptic Monge-Amp\`ere equations $\det(D^2 u)=\psi(x,u,Du)$ using the Aleksandrov maximum principle.\\
Exploiting a notion of capacity and weak solution the results in \cite{L} (2002) by D. Labutin and \cite{TW2} (2002)  by N. S. Trudinger and X. Wang make it plausible that for $m \leq n/2$ and if the $(n-2m)$-dimensional Hausdorff measure of $E$ is finite there should be no additional constraints on the solution necessary to guarantee removability. But, at least for the Monge-Amp\`ere equation in $\R^2$, it is known that even a point singularity of a continuous solution to $\eqref{Hessian}$ does not need to be removable without further assumptions, see \cite{J} and \cite{B1}.\\
Another way to ensure removability can be found in \cite{WZ} (1999) for $F(D^2u)=f(x)$ and later in \cite{S} (2007) F. Schulz was able to handle more general equations of the form $F(Du,D^2u)=\psi(x,u,Du)$ including the Monge-Amp\`ere, the Hessian and the Weingarten equations by taking up the idea of \cite{B2}. In both, more general elliptic equations are considered, and the proofs rely on versions of the Aleksandrov maximum principle. Hence, the conditions involve in a way the Lebegues measure of the union over the image of the lower normal mapping of the function $u-v$ in the singular set, where $u$ is the singular solution and $v$ a classical solution of \eqref{Hessian} on $\Omega$. From their condition L. Wang and N. Zhu derive in  \cite{WZ} (stated here for the case of $m$-Hessian equations):\\[1mm]
{\em Let $u\in C^{2}(\Omega\setminus E)\cap \operatorname{Lip}(\Omega)\cap                                                 C(\overline\Omega)$ be an $m$-admissible solution of equation \eqref{Hessian} such that the Dirichlet problem \eqref{nonsing} has an $m$-admissible solution $v\in C^2(\Omega)\cap C(\overline \Omega)$ with $\varphi=u_{|\partial \Omega}$. Let $E\subset\subset \Omega$ be a measurable set of dimension $l<n$. Then $E$ is removable, i.e. $(v=)u\in C^2(\Omega)$, if for every $x\in E$ there are $l+1$ independent $C^{2}$-curves $\{r_{xi}\}$ through x, $i\in \{1,2,\dots,l+1\}$, such that $u(r_{xi})\in C^{1}$}.\\[1mm]
In this paper we are able to weaken their assumptions partly and prove the following. 
\begin{corollary*}
 Let $u\in C^{2}(\Omega\setminus E) \cap  C( \overline \Omega)$ be an $m$-admissible solution of equation \eqref{Hessian} such that the Dirichlet problem \eqref{nonsing}
 has an $m$-admissible solution $v\in C^2(\Omega)\cap C(\overline \Omega)$ with $\varphi=u_{|\partial \Omega}$. Let $E \subset \subset \Omega$ be a closed  subset of a $C^{1,1}$-submanifold, $M$, of dimension $l<n$. Then $E$ is removable, i.e. $(v=)u\in C^2(\Omega)$, if for every $x \in E$ there is one continuous curve $\gamma$ through $x$, which is differentiable at $x$ and transversal (i.e. not tangential) to $M$ at $x$, such that $u$ is differentiable at $x$ along $\gamma$. 
\end{corollary*}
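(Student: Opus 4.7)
My plan is to compare $u$ with the classical solution $v$ through a linearized equation and, assuming $u \ne v$, to derive a contradiction at a maximum of $u - v$ in $E$ via a quantitative Hopf lemma combined with the differentiability of $u$ along $\gamma$. Setting $w := u - v$ and using that the $m$-admissible cone $\Gamma_m$ is convex and $F_m$ is elliptic on $\Gamma_m$, the standard integration gives
\[
a^{ij}(x)\partial_{ij}w = 0 \quad\text{on } \Omega\setminus E, \qquad a^{ij}(x) := \int_0^1 \frac{\partial F_m}{\partial r_{ij}}\bigl(sD^2u+(1-s)D^2v\bigr)\,\der s,
\]
so $L := a^{ij}\partial_{ij}$ is linear and uniformly elliptic on every compact subset of $\Omega\setminus E$, with $w\in C(\overline\Omega)$ and $w=0$ on $\partial\Omega$. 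If $w\not\equiv 0$, then after possibly replacing $w$ by $-w$ (which still solves $L(-w)=0$) I may assume $M_w := \max_{\overline\Omega}w > 0$; the strong maximum principle for $L$ on each connected component of $\Omega\setminus E$, together with continuity of $w$ and the boundary data, then forces $M_w$ to be attained at some $x_0 \in E$.

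\textbf{Touching ball construction.} Since $E \subset M$ with $M$ a $C^{1,1}$-submanifold of dimension $l < n$, in local coordinates adapted to $T_{x_0}M$ the manifold $M$ is the graph of a $C^{1,1}$ map $h\colon T_{x_0}M \to (T_{x_0}M)^\perp$ with $h(0)=0$ and $Dh(0)=0$, so $|h(y)| \le C|y|^2$ near $0$. Decomposing $\tau := \gamma'(0) = \tau_\parallel + \tau_\perp$, transversality yields $\tau_\perp \ne 0$. Setting $\nu := \tau_\perp/|\tau_\perp|$ and $B := B_r(x_0 + r\nu)$ for $r > 0$ small, a direct computation with the quadratic bound on $h$ shows $\overline{B} \cap M = \{x_0\}$, whence $B \subset\subset \Omega\setminus E$ with $x_0 \in \partial B$.

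\textbf{Hopf lemma and contradiction.} The quantitative Hopf lemma for $L$ on $B$ at $x_0$ (via the usual exponential barrier) provides $c > 0$ with
\[
M_w - w(y) \ge c\bigl(r - |y - x_0 - r\nu|\bigr) \quad\text{for all } y \in B \text{ near } x_0.
\]
Plugging in $y = \gamma(t) = x_0 + t\tau + o(t)$, one checks $\gamma(t) \in B$ for small $t > 0$ and $r - |\gamma(t) - x_0 - r\nu| = t|\tau_\perp| + O(t^2)$, so $M_w - w(\gamma(t)) \ge c t|\tau_\perp| + O(t^2)$ as $t \to 0^+$. On the other hand, $v \in C^2$ and the differentiability of $u$ at $x_0$ along $\gamma$ imply that $w\circ\gamma$ is differentiable at $0$; since $x_0 = \gamma(0)$ is a two-sided maximum of $w\circ\gamma$, its derivative vanishes and $w(\gamma(t)) = M_w + o(t)$. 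The two expansions are incompatible, forcing $w \equiv 0$ and thus $u = v \in C^2(\Omega)$.

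\textbf{Main obstacle.} The delicate step is the geometric construction of the touching ball $B$: the $C^{1,1}$-hypothesis on $M$ is essential for the quadratic bound $|h(y)| \le C|y|^2$, without which $B$ could not be made to avoid $M$ (and hence $E$). A secondary, routine concern is the possible degeneracy of the coefficients $a^{ij}$ near $E$; this is harmless since both the strong maximum principle and the Hopf lemma are applied on sets compactly contained in $\Omega \setminus E$, where $L$ remains uniformly elliptic.
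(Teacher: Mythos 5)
Your ball construction and the overall strategy (compare with the classical solution, locate the maximum of $w=u-v$, rule out an interior maximum on $\Omega\setminus E$ by the strong maximum principle, rule out a maximum on $E$ by a Hopf-type derivative estimate contradicting differentiability along $\gamma$) are exactly the ones the paper uses. But there is a genuine gap in the step where you invoke the ``quantitative Hopf lemma for $L$ on $B$ at $x_0$''.

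You apply the Hopf lemma to the linearized operator $L=a^{ij}\partial_{ij}$ with $a^{ij}(x)=\int_0^1 \partial_{r_{ij}}F_m(sD^2u+(1-s)D^2v)\,ds$ on the touching ball $B$ with $x_0\in\partial B\cap E$. These coefficients involve $D^2u(x)$, and $u$ is only $C^2$ on $\Omega\setminus E$; as $x\to x_0\in E$ the Hessian $D^2u(x)$ can blow up or approach the boundary of the admissible cone. Consequently $L$ need not be uniformly elliptic, nor need its coefficients be bounded, up to the boundary point $x_0$. Your closing remark that ``this is harmless since \dots the Hopf lemma [is] applied on sets compactly contained in $\Omega\setminus E$'' is not correct: the Hopf lemma gives a lower bound on the boundary derivative precisely at $x_0\in E$, and the standard barrier argument requires a uniform ellipticity constant $\lambda_0>0$ and bounded coefficient ratios in all of $B$, including the region adjacent to $x_0$. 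Being locally uniformly elliptic on compact subsets of $\Omega\setminus E$ does not give this, because $\overline B$ is not compactly contained in $\Omega\setminus E$.

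The paper's Lemma~\ref{comparison}\,(iii) is designed exactly to fill this hole. Instead of applying Hopf to $L(u,v)$ directly, one picks an intermediate boundary datum $\varphi_2$ with $v_1(=u-\varepsilon)\le\varphi_2\le v_0(=v)$ on $\partial B$, equality at $x_0$, and strict inequalities somewhere else, and then solves the $m$-Hessian Dirichlet problem on $B$ with boundary data $\varphi_2$ to obtain $u_2\in C^{3,\alpha}(\overline B)$. The linearization $L(u_2,v)$ has bounded coefficients and is uniformly elliptic on $\overline B$ (both arguments are $C^2$ up to the boundary and $v$ is strictly $m$-admissible on the compact $\overline B\subset\subset\Omega$), so the Hopf lemma applies to $v-u_2$; for $u_2-(u-\varepsilon)$ only the weak comparison principle (which needs just local uniform ellipticity in the open ball) is invoked. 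Summing the two estimates gives the required strictly positive lower $\liminf$. Without this splitting your argument does not go through, and you would need some separate a priori control of $D^2u$ near $E$, which is not available.

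A secondary point: you should also make sure the touching ball $B$ sits in a component of $\Omega\setminus E$ where $w<M_w$; if the chosen side of $M$ lands in a component on which $w$ is already identically $M_w$ the Hopf step is vacuous. This is why the paper works with the admissibility filtration $(A_i,E_i)$, the induction over components, and the ``outer semi-transversal'' condition (and in the corollary chooses the side $B^+$ or $B^-$ of $M$ accordingly). For $E$ in a submanifold of dimension $l\le n-2$ the complement is connected and this is automatic, but for $l=n-1$ it is not.
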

In fact we achieve more, see our main Theorem \ref{main}, where $E$ is not necessarily a subset of a submanifold, only relatively compact in $\Omega$,  and $\gamma$ not even a continuous curve, see Section 3 for details. We also demonstrate how our method could be applied to other elliptic equations, see Theorem \ref{generalmain}.\\
We use the idea of F. Schulz and L. Wang from \cite{SW} and expand it according to the goals of this paper. In contrast to \cite{WZ} we utilize, besides the existence and regularity results for $m$-Hessian equations (see \cite{CNS} and \cite{W}), only classical linear elliptic theory, i.e. a generalized Hopf lemma (see Appendix \ref{Hopf}), and the classical maximum principles. But here as well as in \cite{WZ} and \cite{S} the final aim is to show that the singular solution agrees with the classical solution of the corresponding Dirichlet problem in $\Omega$.

The paper is structured as follows. In {\em Section 2} we summarize some known results about $m$-Hessian equations and provide important comparison results in  {\em Subsection \ref{Comparison} } which will be used in the proof of our main theorem.\\
In {\em Section 3} we first introduce a few definitions which will help us formulate our main result (Theorem \ref{main}) in a concise and general way. Finally, in {\em Subsection \ref{general}} we briefly point out what is needed for a general elliptic equation $F(x,u, \nabla u, D^2 u)=0$ in order to carry our approach through.\\
In {\em Appendix A} we give a generalization of the usual Hopf lemma, see e.g. \cite{GT} Lemma $3.4$, because it is fundamental for our approach. It seems to be well known, see the remark in \cite{GT} after Lemma $3.4$. But we could not spot a direct proof in the literature, hence, we would like to shortly present one. For the convenience of the reader, we also cite the comparison and strong maximum principle, which are used throughout the paper.\\
In order to deduce Corollary \ref{intro} from the main theorem we need to establish the existence of a touching ball with prescribed normal unit vector to every point of a $C^{1,1}$-submanifold. This is done in {\em Appendix B}.  Note that this is a generalization of the (interior/exterior) sphere condition.
\begin{notation*}
We adopt in this paper the convention that over doubly occurring indices a summation is understood.\\
For a matrix $(a_{ij})_{i=1,\dots, n}^{ j=1,\dots, n}$ we just write $(a_{ij})$ if there should be no danger of confusion. So $(u_{ij})=D^{2}u$ denotes the Hessian matrix of $u$. \\
$B_r(p)$ will always stand for a ball with center $p$ and radius $r$.\\ 
If $\Omega' \subset \Omega$ and $u$ is a function of $\Omega$ we write $u_{|\Omega'}$ for the restriction  of $u$ to $\Omega'$. Furthermore, a domain, $\Omega$, is assumed to be an open, connected, and bounded subset of $\R^n$.
\end{notation*}

\section{$m-$Hessian equations} \label{Hesseq}

Now we give the relevant definitions and facts, which we need later. These can be found e.g. in \cite{CNS},  \cite{W} and \cite{U}.\\
Let $\Omega \subset \mathbb R^n$ be a domain and $u \in C^2(\Omega)$, then the $m$-Hessian operator ($m=1,\dots,n$) acts on $u$ as
\[
F_{m}(D^{2}u)=:S_{m}(\lambda(D^{2}u))^{1/m}:=\big (\sum\nolimits_{1\leq i_1 < \dots < i_m \leq n} \lambda_{i_1} \cdots \lambda_{i_m} \big )^{1/m},
\]
where $\lambda(D^{2}u)=(\lambda_1,\dots,\lambda_n)$ is the vector of eigenvalues of the Hessian matrix, $D^{2}u$, and $S_m$ is the $m$-th elementary symmetric function. $S_{m}(\lambda(D^{2}u))$ can also be written as the sum of the $m\times m$-principal minors of $D^2u$. $F_1$ is the Laplace operator, but for $m\ge 2$ the operator becomes fully nonlinear with the Monge-Amp\`ere case $\det( D^2 u)$, i.e. $m=n$, as the most prominent example.\\
We call a function $u \in C^2(\Omega)$ {\em $m$-admissible} if $S_{k}(\lambda(D^{2}u)) >0$ for each $k=1,\dots,m$. Of course, an $m$-admissible function is $k$-admissible for $k \le m$, where $1$-admissible agrees with strictly subharmonic and $n$-admissible with strictly convex. It follows immediately from the definition that the right hand side of \eqref{Hessian} must be positive for $m$-admissible solutions.\\
The following two properties of $m$-admissible functions  will be important for us (see \cite{CNS} Section 1, especially Proposition 1.1, and Section 3, or \cite{W} Sections 2.1 and 2.2. Note that in the definition of $m$-admissible in \cite{W} the inequality is not strict, so $m$-admissible corresponds only to degenerate elliptic there):
\begin{lemma}\label{convexelliptic}
The set of $m$-admissible functions constitutes an open convex cone in $C^2(\Omega)$. Furthermore, 
$F_m(D^2u)$ is elliptic whenever $u$ is $m$-admissible, i.e. for each $\xi\in \mathbb R^n\setminus \{0\}$ we have $\frac{\partial F_m(D^2u)}{\partial u_{ij}}\xi^{i}\xi^{j}> 0$ in $\Omega$.
\end{lemma}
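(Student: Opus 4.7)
The cone property is immediate from homogeneity: since $S_k$ is $k$-homogeneous, if $u$ is $m$-admissible and $t>0$, then $D^2(tu)=tD^2u$ has eigenvalue vector $t\lambda(D^2u)$, and $S_k(t\lambda)=t^kS_k(\lambda)>0$ for every $k\le m$. For openness, fix an $m$-admissible $u$ and a compact $K\subset\subset\Omega$. Each $S_k(\lambda(D^2u))$ attains a positive minimum on $K$ by continuity and compactness, and the map $v\mapsto S_k(\lambda(D^2v))|_K$ is continuous in the $C^2(K)$-seminorm (a polynomial in the entries of $D^2v$ composed with the continuous spectral map), so a small $C^2(K)$-neighbourhood of $u$ stays $m$-admissible on $K$. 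Exhausting $\Omega$ by compacts gives openness in the Fr\'echet topology on $C^2(\Omega)$.

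\textbf{Convexity.} This is the real step. Writing $\tilde\Gamma_m\subset\mathrm{Sym}(n)$ for the set of symmetric matrices whose eigenvalue vector lies in the G\aa rding cone $\Gamma_m=\{\mu\in\R^n:S_k(\mu)>0,\ k\le m\}$, convexity of the set of $m$-admissible functions reduces, by linearity of $D^2$, to convexity of $\tilde\Gamma_m$. I would invoke G\aa rding's theorem on hyperbolic polynomials applied to $A\mapsto S_m(\lambda(A))$: this is hyperbolic with respect to the identity matrix because for symmetric $A$ the map $t\mapsto S_m(\lambda(A+tI))$ has only real roots, and $\tilde\Gamma_m$ is exactly the connected component of $\{A:S_m(\lambda(A))>0\}$ containing $I$, which G\aa rding's theorem asserts to be an open convex cone. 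Alternatively, one can prove convexity together with concavity of $S_m^{1/m}$ on $\tilde\Gamma_m$ by an induction on $m$ using the Newton--Maclaurin identities; this is essentially the route of \cite{CNS} Proposition~1.1.

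\textbf{Ellipticity.} The linearisation of $F_m$ is
\[
\frac{\partial F_m}{\partial u_{ij}}=\frac{1}{m}\,S_m^{1/m-1}\,\frac{\partial S_m(\lambda(D^2u))}{\partial u_{ij}},
\]
and the scalar prefactor is positive whenever $u$ is $m$-admissible, so it is enough to show that the matrix $(\partial S_m/\partial u_{ij})$ is positive definite. Fixing a point and choosing coordinates that diagonalise $D^2u=\mathrm{diag}(\lambda_1,\dots,\lambda_n)$, a direct computation (using that $S_m$ depends only on the eigenvalues, and that at a diagonal matrix the derivatives of $\lambda_k$ with respect to off-diagonal entries vanish) shows that this matrix is itself diagonal, with entries $S_{m-1}(\lambda|i)$, where $\lambda|i$ denotes the $(n-1)$-tuple obtained by deleting $\lambda_i$. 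The needed positivity $S_{m-1}(\lambda|i)>0$ for $\lambda\in\Gamma_m$ is the other classical ingredient: it follows from the hyperbolic-polynomial formalism (partial derivatives of a hyperbolic polynomial are again hyperbolic, with enlarged G\aa rding cone) or by a direct induction exploiting the identity $S_k(\lambda)=S_k(\lambda|i)+\lambda_i S_{k-1}(\lambda|i)$.

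The main obstacle in a self-contained treatment is the G\aa rding/CNS package: convexity of $\tilde\Gamma_m$ together with the positivity $S_{m-1}(\lambda|i)>0$ on $\Gamma_m$. Everything else in the lemma reduces to homogeneity, continuity, and the chain rule, but these two facts genuinely require the combinatorics of elementary symmetric polynomials (or, equivalently, the theory of hyperbolic polynomials).
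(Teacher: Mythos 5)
The paper offers no proof of its own for this lemma: it refers the reader to \cite{CNS} (Proposition~1.1 and Section~3) and \cite{W} (Sections~2.1--2.2). Your proposal reconstructs the argument that sits behind those citations, and the substantive mathematics is correct: homogeneity of $S_k$ gives the cone property; G\aa rding's theorem on hyperbolic polynomials (or the Newton--Maclaurin induction, which is how CNS Prop.~1.1 does it) gives convexity of $\Gamma_m$; and the diagonalisation of $D^2u$ together with $S_{m-1}(\lambda|i)>0$ on $\Gamma_m$ give ellipticity. You have also correctly isolated the two genuinely nontrivial inputs --- convexity of $\Gamma_m$ and positivity of $S_{m-1}(\lambda|i)$ --- and pointed at the right sources for each.

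One sentence is not right as written. You claim that ``exhausting $\Omega$ by compacts gives openness in the Fr\'echet topology on $C^2(\Omega)$.'' It does not: a basic Fr\'echet neighbourhood of $u$ constrains $v$ only on finitely many compacts $K_1,\dots,K_N\subset\subset\Omega$, so any such neighbourhood contains functions that fail $m$-admissibility somewhere in $\Omega\setminus\bigcup_j K_j$ (already for $m=1$, $\Omega=(0,1)$, $u(x)=x^2$: modify $v$ near an endpoint). The statement CNS actually proves is that $\Gamma_m\subset\R^n$ (equivalently its matrix preimage in $\mathrm{Sym}(n)$) is open; this translates to openness of the $m$-admissible functions in $C^2(\overline{\Omega'})$ for each $\Omega'\subset\subset\Omega$, i.e.\ a local/pointwise openness, not openness in the Fr\'echet space as a whole. (A smaller quibble: at a diagonal matrix with a repeated diagonal entry the individual eigenvalues $\lambda_k$ are not differentiable, so ``the derivatives of $\lambda_k$ with respect to off-diagonal entries vanish'' is not literally available; the diagonality of $(\partial S_m/\partial u_{ij})$ is obtained more robustly from the fact that $S_m(\lambda(A))$ is the sum of principal $m\times m$ minors of $A$, a polynomial in the entries, whose off-diagonal cofactors at a diagonal matrix vanish.) Since the paper only ever uses openness in the local sense, these slips do not affect anything downstream, but the Fr\'echet claim should be removed or replaced by a statement on compact subdomains.
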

A $C^{2}$-domain $\Omega$ is called {\em$(m-1)$-convex} if its boundary satisfies the condition
$S_{m-1}(\kappa)\ge c_{0}>0$  on  $\partial\Omega$
for some positive constant $c_{0}$, where $\kappa=(\kappa_{1},\dots,\kappa_{n-1})$ denotes the principle curvatures of $\partial\Omega$ with respect to its inner normal. We want to explicitly mention that every ball is $(m-1)$-convex for every $m$.\\
We will also make heavy use of the following existence result, see \cite{W} Theorem 3.4 and \cite{T}.

\begin{theorem} \label{subsec: cns} 
Let $\Omega$ be an $(m-1)$-convex domain in $\mathbb R^n$, $\partial{\Omega}\in C^{3,1}$, $f \in C^{1,1}(\overline{\Omega})$ with $f(x) \geq f_0 > 0$ and $\varphi\in C^{3,1}(\partial{\Omega})$. Then there exists a unique $m$-admissible function $u\in C^{3,\alpha}(\overline{\Omega})$, $\alpha\in (0,1)$, solving the Dirichlet problem
\[
(DP) \begin{cases}
F_m(D^2u)= f & \text{ in } \Omega\\
u=\varphi & \text{ on } \partial{\Omega}. 
\end{cases}
\]
\end{theorem}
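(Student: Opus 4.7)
The plan is the classical method of continuity combined with the Evans--Krylov regularity theorem. Uniqueness is immediate from the comparison principle: if $u,v$ are two $m$-admissible solutions with the same boundary values, then since the admissible cone is convex by Lemma~\ref{convexelliptic}, $F_m$ is elliptic along the segment $u + s(v-u)$, so writing $F_m(D^2v) - F_m(D^2u) = a^{ij}(v-u)_{ij}$ with a positive matrix $a^{ij}$ obtained by integrating the derivative, we see $v-u$ satisfies a linear elliptic equation with zero right hand side and zero boundary values, hence vanishes. So the task is existence.

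For existence, first fix a reference $m$-admissible function $\underline u \in C^{3,\alpha}(\overline \Omega)$ with $\underline u = \varphi$ on $\partial \Omega$; on a ball this is trivial, and on a general $(m-1)$-convex $C^{3,1}$-domain one constructs such an $\underline u$ by extending a suitable quadratic perturbation inward from the boundary, using the curvature hypothesis. Set $f_t := t f + (1-t) F_m(D^2 \underline u)$ and consider the family
\[
(DP)_t \qquad F_m(D^2 u_t) = f_t \text{ in } \Omega, \qquad u_t = \varphi \text{ on } \partial \Omega.
\]
Let $S \subset [0,1]$ denote the set of parameters for which $(DP)_t$ admits an $m$-admissible $C^{3,\alpha}(\overline \Omega)$-solution; then $0 \in S$ since $u_0 = \underline u$ works, and the goal is to show $S$ is both open and closed in $[0,1]$.

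Openness at $t_0 \in S$ is routine Schauder theory: the linearization $L_{t_0} v = \frac{\partial F_m(D^2 u_{t_0})}{\partial u_{ij}} v_{ij}$ is by Lemma~\ref{convexelliptic} uniformly elliptic with $C^{1,\alpha}$ coefficients, so it is an isomorphism from $\{v \in C^{3,\alpha}(\overline \Omega) : v|_{\partial \Omega} = 0\}$ onto $C^{1,\alpha}(\overline \Omega)$, and the implicit function theorem supplies $u_t$ for $t$ near $t_0$; these remain $m$-admissible by continuity and the openness of the admissible cone. Closedness amounts to uniform a priori $C^{3,\alpha}$ estimates along the family. The $C^0$ bound follows by comparing $u_t$ with $\underline u$ and with a large paraboloid supersolution; the $C^1$ bound from linear barriers at $\partial \Omega$ together with the maximum principle applied to the equation differentiated once; the interior $C^2$ bound from a Pogorelov-type identity obtained by differentiating the equation twice and exploiting concavity; and once a uniform $C^2$ bound is in hand, the concavity of $F_m^{1/m}$ on the admissible cone lets Evans--Krylov upgrade to $C^{2,\alpha}$, after which Schauder theory delivers $C^{3,\alpha}$.

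The main obstacle is the boundary $C^2$ estimate, and this is where the $(m-1)$-convexity of $\partial \Omega$ enters in an essential way. Flattening the boundary at a point so that the inner normal is $e_n$, the tangential-tangential derivatives $u_{\alpha\beta}$ ($\alpha,\beta < n$) are controlled directly by $\varphi$, and the mixed derivatives $u_{\alpha n}$ are controlled by differentiating the equation in tangential directions and invoking a carefully tailored barrier. The remaining double-normal derivative $u_{nn}|_{\partial \Omega}$ must be recovered from the equation itself: this requires the coefficient of $u_{nn}$ in the expansion of $F_m$ — which up to boundary terms equals $S_{m-1}$ evaluated at the tangential Hessian plus contributions from the principal curvatures of $\partial \Omega$ — to admit a positive lower bound, and this is exactly the content of $S_{m-1}(\kappa) \ge c_0 > 0$. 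This is the geometrically and algebraically most delicate step, following the Caffarelli--Nirenberg--Spruck strategy; with it in place, passing to a convergent subsequence closes $S$ and completes the proof.
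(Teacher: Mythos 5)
The paper does not prove this theorem itself; it is quoted from the literature, specifically from Wang's survey (\cite{W}, Theorem~3.4) and Trudinger's paper \cite{T}, which in turn rest on the Caffarelli--Nirenberg--Spruck program \cite{CNS}. Your proposal is therefore best judged against those sources rather than against an in-paper proof.

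Measured that way, your outline is an accurate reconstruction of the standard continuity-method argument and hits all the essential points in the right order: uniqueness via the linearized comparison principle and the convexity of the admissible cone; construction of an $m$-admissible subsolution $\underline u$ with the prescribed boundary data; the homotopy $f_t$ and the open-and-closed argument; openness via the implicit function theorem and Schauder isomorphism for the linearization; closedness via the $C^0$, $C^1$, $C^2$ a priori bounds, followed by Evans--Krylov (using concavity of $F_m$ on the admissible cone) to reach $C^{2,\alpha}$, and then Schauder bootstrapping with $f\in C^{1,1}$ to reach $C^{3,\alpha}$. You also correctly single out the double-normal boundary second derivative as the point where the $(m-1)$-convexity $S_{m-1}(\kappa)\ge c_0>0$ is used, which is exactly the geometric heart of the CNS/Trudinger estimate. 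Two small remarks: the construction of $\underline u$ on a general $(m-1)$-convex domain is not merely cosmetic and requires the curvature hypothesis already at this stage (you do flag this, but it deserves emphasis, since one also needs $F_m(D^2\underline u)$ bounded below by a positive constant to keep $f_t\ge f_0'>0$ along the homotopy); and for general $m<n$ the global $C^2$ interior estimate is usually obtained by showing the maximum of a suitable second-derivative quantity migrates to the boundary via a maximum-principle argument on the doubly differentiated equation, rather than by a Pogorelov identity in the Monge--Amp\`ere sense, though the two are closely related. Neither point is a gap; your sketch is sound.
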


\begin{remark} \label{uniformly}
The regularity theory for elliptic equations gives better regularity, i.e. $u \in C^{k+2,\alpha}$, if $f \in C^{k,\alpha}$, $k \geq 1$ and $0<\alpha<1$, as this holds for solutions of second order equations, $F(x,u,\nabla u, D^2 u)=0$, in general, if  $F$ is elliptic with regard to $u$, see \cite{GT} Lemma $17.16$.\\
As mentioned in \cite{W} at the end of Section $3.1$, $F_m$ is automatically uniformly elliptic with regard to this solution.\\
It is also known that the regularity assumptions can be reduced for the Monge-Amp\`ere equation, see \cite{TW2} Theorem $4.1.$ 
\end{remark}

\subsection{Comparison lemmata} \label{Comparison} 

Let us consider for the moment a general function 
\begin{equation} \label{generalfu}
 F(x,z,p,r) \in C^1(\Omega,\mathbb R, \mathbb R^n, \mathbb R^{n \times n})
\end{equation} 
and the differential equation
\begin{equation}\label{generaleq}
  F[u]=F(x,u,\nabla u, D^2 u)=0, \quad x \in \Omega. 
\end{equation}  
Assume $v_0,\, v_1 \in C^2(\Omega)$ are solutions of \eqref{generaleq} (or e.g. $v_1$ a subsolution, i.e. $F[v_1]\geq 0$).  We define $w=v_{1}-v_{0}$ and $w_\theta = \theta v_1 + (1-\theta)  v_0$, $\theta\in [0,1]$. Then $w$ is a solution (subsolution) of the following linear equation (as in the proof of Theorem $17.1$ from \cite{GT}):
\begin{equation}\label{linearoperator}
L(v_{1},v_{0})w=Lw=  a_{ij}(x)  \frac{\partial^2w}{\partial x_i \partial x_j}+ b_{i}(x)\frac{\partial w}{\partial x_{i}}+c(x)w=0(\geq 0) 
\end{equation}
with
\[ 
a_{ij} := \int_0^1  \frac{\partial F[w_\theta]}{\partial r_{ij}} \der \theta, \;  b_{i}:=\int_0^1\frac{\partial 
F[w_\theta]}{\partial p_{i}} \der \theta,  \;  c :=\int_0^1\frac{\partial 
F[w_\theta]}{\partial z}\der \theta.
\]
 Let $\lambda(x)$ be the least and $\Lambda(x)$  the greatest eigenvalue of the symmetric matrix $(a_{ij}(x))$. These depend continuously on $x\in \Omega$ if $(a_{ij}(x))$ does (see e.g. \cite{K} Chapter 2, Sections 5.2 and 5.7).\\ 
We also write
\[
a^\theta_{ij}(x) := \frac{\partial F[w_\theta]}{\partial r_{ij}},   \quad  b^\theta_{i}(x):=\frac{\partial 
F[w_\theta]}{\partial p_{i}},    \quad c^\theta(x):=\frac{\partial 
F[w_\theta]}{\partial z}.  
\]
We call the linear operator $L=L(v_1,v_0)=L(F;v_1,v_0)$ the linearization of $v_1$ and $v_0$.

\begin{proposition}\label{linearelliptic}
Let $\mathcal U \subset C^2(\Omega)$  be a convex set such that $F$ is elliptic for all elements of  $\mathcal U$. Let $v_0,v_1 \in \mathcal U$ and $L(v_{1},v_{0})$ as in (\ref{linearoperator}). Then the following holds true:
\begin{enumerate}[label=(\roman*)]
\item $L(v_1,v_0)$ is elliptic in $\Omega$.
\item Let $\Omega' \subset \subset \Omega$. Then the coefficients $a_{ij}$, $b_i$ and $c$ are bounded in $\Omega'$. Furthermore,  $L(v_1,v_0)$ is uniformly elliptic in $\Omega'$, i.e. there are constants $\lambda_0,\Lambda_0$ with $0<\lambda_0\le \Lambda_0<\infty$ such that $ \lambda_0 \leq \lambda(x) \leq \Lambda(x) \leq \Lambda_0$ for every $x \in \Omega'$. In other words, $L(v_1,v_0)$ is locally uniformly elliptic and its coefficients are locally bounded in $\Omega$.
\item If $F \in C^1(\overline \Omega,\mathbb R, \mathbb R^n, \mathbb R^{n \times n})$ and $v_0,v_1 \in C^2(\overline \Omega)$ and $a^1_{ij}$ or $a^0_{ij}$ is positive definite in $\overline \Omega$, i.e. $F$ is uniformly elliptic in $\Omega$ with regard to $v_1$ or $v_0$, then $L$ is uniformly elliptic in $ \Omega$ and its coefficients are bounded. 
\item If in addition $\frac{\partial F(x,z,p,r)}{\partial z}=:F_z \leq 0$, then $c \leq 0$.
\end{enumerate} 
\end{proposition}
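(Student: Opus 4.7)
The plan is to treat parts (i), (ii), and (iv) as direct consequences of the convexity of $\mathcal{U}$, joint continuity of the integrands, and the sign hypothesis on $F_z$, and to reserve the main work for (iii).

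For (i), convexity of $\mathcal{U}$ gives $w_\theta \in \mathcal{U}$ for every $\theta \in [0,1]$, so ellipticity of $F$ at each $w_\theta$ makes $\theta \mapsto a^\theta_{ij}(x)\xi^i\xi^j$ a continuous positive function on $[0,1]$ (for each fixed $x \in \Omega$ and $\xi \neq 0$), whose integral against $\der\theta$ is therefore positive. For (iv), $c^\theta = \partial F[w_\theta]/\partial z \leq 0$ pointwise in $\theta$ by hypothesis, and the integral preserves the sign.

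For (ii), I would observe that each integrand $a^\theta_{ij}(x)$, $b^\theta_i(x)$, $c^\theta(x)$ is jointly continuous in $(x,\theta)$ on the compact set $\overline{\Omega'} \times [0,1]$, since $F \in C^1$ and $(x,\theta) \mapsto (w_\theta(x), \nabla w_\theta(x), D^2 w_\theta(x))$ is continuous in $x$ and linear in $\theta$. Hence the integrated coefficients $a_{ij}$, $b_i$, $c$ are continuous and bounded on $\overline{\Omega'}$. Invoking the cited continuous dependence of the eigenvalues of a symmetric matrix on its entries, (i) makes $x \mapsto \lambda(x)$ a positive continuous function on $\overline{\Omega'}$, so it attains a positive minimum $\lambda_0 > 0$; similarly $\Lambda(x) \leq \Lambda_0 < \infty$.

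The main obstacle is (iii), where one cannot transfer the compactness step in (ii) directly to $\overline{\Omega}$ without exploiting the quantitative hypothesis that $a^1_{ij}$ (or $a^0_{ij}$) is positive definite on $\overline{\Omega}$. Boundedness of the coefficients on $\overline{\Omega}$ follows from the same continuity argument, now applied on the compact set $\overline{\Omega} \times [0,1]$. For uniform ellipticity, say $a^1_{ij}$ is positive definite on $\overline{\Omega}$. Then $(x,\xi) \mapsto a^1_{ij}(x)\xi^i\xi^j$ attains a positive minimum $\lambda_1 > 0$ on the compact set $\overline{\Omega} \times S^{n-1}$. Joint continuity of $(x,\theta,\xi) \mapsto a^\theta_{ij}(x)\xi^i\xi^j$ on $\overline{\Omega} \times [0,1] \times S^{n-1}$ then yields, by uniform continuity, some $\delta > 0$ with $a^\theta_{ij}(x)\xi^i\xi^j \geq \lambda_1/2$ for all $(x,\xi,\theta) \in \overline{\Omega} \times S^{n-1} \times [1-\delta, 1]$. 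Combining this with the non-negativity of $a^\theta_{ij}(x)\xi^i\xi^j$ on the remaining range $\theta \in [0, 1-\delta]$, which follows from ellipticity of $F$ at each $w_\theta \in \mathcal{U}$ extended to $\overline{\Omega}$ by continuity, gives $a_{ij}(x)\xi^i\xi^j \geq \delta\lambda_1/2$ uniformly on $\overline{\Omega} \times S^{n-1}$. The case where $a^0_{ij}$ is positive definite is symmetric.
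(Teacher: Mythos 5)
Your proposal is correct and follows essentially the same approach as the paper: convexity gives (i), joint continuity plus compactness gives (ii), and the sign of $F_z$ gives (iv). For (iii) the paper first deduces pointwise positive definiteness of $a_{ij}(x)$ on $\overline\Omega$ (positive semi-definite integrand, positive definite at $\theta=1$, continuous in $\theta$) and then reuses the compactness argument of (ii), whereas you extract the uniform lower bound directly via uniform continuity on $\overline\Omega\times[0,1]\times S^{n-1}$ — a slightly more quantitative packaging of the same idea.
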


\begin{proof}
$(i)$: Due to the convexity of $\mathcal U$, $F$ is elliptic for every $w_\theta$, that is every $a^\theta_{ij}$ is positive definite. Hence, $L(v_1,v_0)$ is elliptic.\\
$(ii)$: As $F \in C^1(\Omega,\mathbb R, \mathbb R^n, \mathbb R^{n \times n})$ and $v_0,v_1 \in C^2(\Omega)$  all functions $a_{ij}^\theta(x)$, $b_i^\theta(x)$, and $c^\theta(x)$ depend for $\theta \in [0,1]$ equicontinuously on $x$. Hence, $a_{ij}(x)$, $b_i(x)$, and $c(x)$ depend continuously on $x$. This implies the boundedness of the coefficients as well as of $\lambda(x)$ and $\Lambda(x)$ on $\overline \Omega'$. Moreover, $\lambda(x)> 0$ on  $\overline \Omega'$, so $\lambda(x)\geq \lambda_0 >0$.\\  
$(iii)$: Observe first that for every $x \in \overline \Omega$ $a^\theta_{ij}(x)$ is positive semi-definite and, say, $a^1_{ij}(x)$ positive definite. Furthermore, $a^\theta_{ij}(x)$ depends continuously on $\theta$, hence, $a_{ij}(x)$ is positive definite. So the same argument as in $(ii)$ finishes the proof.\\
$(iv)$: This follows immediately from the definition of $c$.  
\end{proof}

\begin{corollary}\label{hesseprop}
The Proposition is, in particular, applicable to the linearization 
\[
L(v_1,v_0) w= \int_0^1  \frac{\partial F_m[w_\theta]}{\partial r_{ij}} \der \theta \frac{\partial^2w}{\partial x_i \partial x_j}= a_{ij}\frac{\partial^2w}{\partial x_i \partial x_j}
\]
of two $m$-admissible solutions, $v_1$ and $v_0$, of the $m$-Hessian equation \eqref{nonsing}.
\end{corollary}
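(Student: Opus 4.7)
The plan is to simply verify that all the hypotheses of Proposition \ref{linearelliptic} are met for the specific function $F = F_m$ acting on two $m$-admissible solutions, and then read off the simplified form of $L(v_1,v_0)$ from the structure of $F_m$.

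First, I take $\mathcal U$ to be the set of all $m$-admissible functions in $C^2(\Omega)$. By Lemma \ref{convexelliptic}, this set is a convex cone and $F_m$ is elliptic on every element of $\mathcal U$, so in particular $\mathcal U$ is a convex subset of $C^2(\Omega)$ on which $F_m$ is elliptic. Both $v_0$ and $v_1$ belong to $\mathcal U$ by assumption, so every convex combination $w_\theta = \theta v_1 + (1-\theta) v_0$ is again $m$-admissible, which is exactly the hypothesis required by Proposition \ref{linearelliptic}.

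Next, I observe that $F_m(D^2u)$ depends only on the Hessian entries $r_{ij}$ and not on $x$, $z=u$, or $p=\nabla u$. Consequently
\[
\frac{\partial F_m[w_\theta]}{\partial p_i} = 0, \qquad \frac{\partial F_m[w_\theta]}{\partial z} = 0
\]
for every $\theta \in [0,1]$, so in the definition of the linearization the coefficients $b_i$ and $c$ vanish identically. This yields precisely the simplified expression for $L(v_1,v_0) w$ displayed in the statement of the corollary.

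Finally, I apply Proposition \ref{linearelliptic}: parts $(i)$ and $(ii)$ give that $L(v_1,v_0)$ is elliptic on $\Omega$, locally uniformly elliptic, and has locally bounded coefficients. Part $(iv)$ trivially gives $c \le 0$ since $c \equiv 0$. There is essentially no obstacle here; the only point worth flagging is that the convex combination $w_\theta$ remains $m$-admissible thanks to the convex cone structure from Lemma \ref{convexelliptic}, which is exactly what lets one pull the derivative under the integral and invoke ellipticity of $F_m$ at every $w_\theta$ rather than just at the two endpoints.
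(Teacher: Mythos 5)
Your proof is correct and takes the same route as the paper, which simply cites Proposition \ref{linearelliptic} together with Lemma \ref{convexelliptic}; you have merely spelled out the details (the convex cone structure making $w_\theta$ $m$-admissible, and the vanishing of $b_i$ and $c$ since $F_m$ depends only on $D^2u$).
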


\begin{proof}
This follows from the above Proposition and Lemma \ref{convexelliptic}. 
\end{proof}

\begin{lemma} \label{satz:zwischenfunktion}
Let $\Omega\subset \mathbb R^n$ be a domain, $a$ and $x_{1}$ boundary points of $\Omega$, $x_1\ne a$. Suppose $v_{1}$ and $v_{0}$ are functions such that $v_1\in C^0(\overline \Omega)$ and $v_0\in C^{k,\alpha}(\overline \Omega)$ with $k\in \mathbb N$ and $0\le\alpha\le1$. Furthermore $v_1\le v_0$ in $\overline\Omega$, $v_1(a)=v_0(a)$ and $v_1(x_1)<v_0(x_1)$. Then there exists an intermediate function $\varphi_2 \in C^{k,\alpha}(\overline \Omega)$ satisfying
\begin{equation} \label{intermediate}
 \begin{cases}
 v_1=\varphi_2=v_0 \text{ at } a\\
  v_1<\varphi_2<v_0 \text{ at } x_{1}\\
  v_1 \le \varphi_2 \le v_0 \text{ in } \overline\Omega.
 \end{cases}
\end{equation}
\end{lemma}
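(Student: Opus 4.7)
The plan is to produce $\varphi_2$ as a small smooth perturbation of $v_0$, of the form $\varphi_2 = v_0 - \varepsilon\eta$ where $\eta$ is a nonnegative bump function supported near $x_1$ but away from $a$, and $\varepsilon>0$ is chosen small enough that the perturbation stays above $v_1$. This leaves $v_0$ essentially unchanged away from $x_1$, hence $\varphi_2 \le v_0$ everywhere with equality at $a$, and pushes $v_0$ down at $x_1$ but only by an amount smaller than the gap $v_0(x_1)-v_1(x_1)$.

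In detail, I would proceed as follows. First, since $x_1 \neq a$, choose $r>0$ so small that $a \notin \overline{B_r(x_1)}$. Next, because $v_0-v_1$ is continuous on $\overline\Omega$ (recall $v_1 \in C^0(\overline\Omega)$, $v_0 \in C^{k,\alpha}(\overline\Omega)$) and $(v_0-v_1)(x_1)>0$, after possibly shrinking $r$ there is a $\delta>0$ with
\[
v_0(x)-v_1(x) \ge \delta \qquad \text{for all } x \in \overline{B_r(x_1)}\cap \overline\Omega.
\]
Pick a bump function $\eta \in C_c^{\infty}(\mathbb R^n)$ with $\operatorname{supp}\eta \subset B_r(x_1)$, $0 \le \eta \le 1$, and $\eta(x_1)=1$, and set $\varepsilon := \delta/2$ (any value in $(0,\delta)$ works). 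Define
\[
\varphi_2 := v_0 - \varepsilon\eta.
\]

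It remains to check the four properties in \eqref{intermediate}. Regularity: $\varphi_2 \in C^{k,\alpha}(\overline\Omega)$ since $v_0 \in C^{k,\alpha}(\overline\Omega)$ and $\eta \in C^\infty$. At $a$: because $a \notin \operatorname{supp}\eta$, $\eta(a)=0$, so $\varphi_2(a)=v_0(a)=v_1(a)$. At $x_1$: $\varphi_2(x_1) = v_0(x_1)-\varepsilon$, and since $0 < \varepsilon < \delta \le v_0(x_1)-v_1(x_1)$ we have $v_1(x_1)<\varphi_2(x_1)<v_0(x_1)$. Upper bound: $\varphi_2 \le v_0$ on $\overline\Omega$ because $\eta \ge 0$. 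Lower bound: outside $\overline{B_r(x_1)}$, $\eta \equiv 0$ so $\varphi_2 = v_0 \ge v_1$; inside $\overline{B_r(x_1)}\cap\overline\Omega$, $\varepsilon\eta \le \varepsilon < \delta \le v_0-v_1$, hence $\varphi_2 \ge v_1$ there as well.

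There is no real obstacle here beyond making the correct choice of scale $\varepsilon$; the only point one has to be careful about is that the perturbation $\varepsilon\eta$ stays under $v_0-v_1$ globally, and this is precisely why one localizes $\eta$ inside a ball around $x_1$ where, by continuity, the gap $v_0-v_1$ is uniformly bounded below by a positive constant.
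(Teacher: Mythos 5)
Your proof is correct and follows essentially the same route as the paper: defining $\varphi_2 = v_0 - (\text{small constant})\cdot(\text{bump function supported in a small ball around }x_1\text{ away from }a)$, with the constant chosen as half the positive lower bound of $v_0 - v_1$ on that ball. The only cosmetic difference is that the paper takes the minimum of $v_0-v_1$ over the closed ball directly, while you shrink the radius to first obtain a uniform lower bound, but both arrive at the same construction.
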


\begin{proof}
Because of $v_0-v_1\in C^0(\overline \Omega)$ and $(v_0-v_1)(x_1)> 0$ there exists an $\varepsilon>0$ and a closed ball, $\overline B_{\varepsilon}(x_{1})$, around $x_1$, such that $v_{0}-v_{1}>0$ in $\overline{B_{\varepsilon}}(x_{1}) \cap \overline \Omega $ and $a$ lies in its complement $\overline{B_{\varepsilon}}(x_1)^c$. \\
The minimum \[
  \min_{\overline {B_{\varepsilon}}(x_{1}) \cap \overline \Omega}(v_0-v_1)(x):=\delta'' >0
 \] 
exists. Define $\delta':=\frac{1}{2}\delta''$ and the intermediate function
\[
    \varphi_2:=v_0-\delta' \psi(x),
\]
where $\psi(x) \in C^{\infty}(\overline\Omega)$ denotes a smooth function 
in $\overline\Omega$ such that
\[
 \psi(x):=\begin{cases}
           1  \quad\text{ at } x_{1},\\
           \in [0,1]  \quad\text{ for } 0<|x-x_1|<\varepsilon,\\
           0  \quad\text{ for } |x-x_1|\ge \varepsilon.
          \end{cases}
\]
This implies $\delta' \psi\in C^{\infty}(\overline\Omega)$ and $\varphi_2 \in C^{k,\alpha}(\overline \Omega)$. Moreover, $\varphi_2 $  satisfies \eqref{intermediate} by construction. 
\end{proof}

The lemma below is an adaption to our more general situation of two lemmata from \cite{SW}.

\begin{lemma} \label{comparison}
Given a ball $B \subset \R^n$ and two $m$-admissible solutions, $v_0, v_1 \in C^0(\overline B)\cap C^2(B)$, of the $m$-Hessian equation
\[
 F_{m}(D^{2}u)=f(x) \geq f_0 >0, \, f \in C^{1,1}(\overline B),\, x \in B.
\]
Furthermore, we assume $v_0 \ge v_1$ on $\partial B$ and that there exist two boundary points  $a, x_1 \in \partial B$ with $v_0(a)=v_1(a)$ and $v_0(x_1) > v_1(x_1)$.\\
Then the following comparison results hold true (note that the case $+\infty$ is allowed):
\begin{enumerate}[label=(\roman*)]
\item $v_0(x) \geq v_1(x)$ for all $x \in B$ and, therefore,
\[
\liminf_{x \to a,\, x\in B}\frac{(v_{0}-v_{1})(x)}{\Vert x-a\Vert} \geq 0.
\]
\item If  $v_0, v_1 \in C^2(\overline B)$ and $F_m$ is uniformly elliptic in $B$ with regard to, say, $v_1$, then 
\[
 \liminf_{x\to a,\, x\in K(a)\cap B}\frac{(v_{0}-v_{1})(x)}{\Vert x-a\Vert} >0 
 \]
for every closed convex cone, $K(a)$, with apex $a$ and such that $K(a) \cap B_{\varepsilon}(a) \subset B$ for $\varepsilon>0$ small enough (see also Appendix \ref{hopf}).
\item  If $v_1\in C^0(\overline B)\cap C^2(B)$, $v_0\in C^{3,1}(\overline B)$, then it also follows for every cone $K(a)$ as above that 
\[
 \liminf_{x\to a,\, x\in K(a)\cap B}\frac{(v_{0}-v_{1})(x)}{\Vert x-a\Vert} >0. 
 \]
 \end{enumerate}
\end{lemma}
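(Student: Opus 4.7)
My plan is to reduce everything to the linearized operator $L=L(v_1,v_0)$ of \eqref{linearoperator}, which by Corollary \ref{hesseprop} is elliptic and, because $F_m$ depends only on $D^2u$, carries no zero-order term. With $w:=v_0-v_1\in C^0(\overline B)\cap C^2(B)$, the two equations $F_m(D^2v_0)=F_m(D^2v_1)=f$ combine into $Lw=0$ in $B$, and this single linear problem drives all three parts.

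For (i), I invoke the strong minimum principle, available because $L$ is locally uniformly elliptic with locally bounded coefficients by Proposition \ref{linearelliptic}(ii): if $w$ attained a negative value somewhere, then since $w\ge 0$ on $\partial B$ its infimum over $\overline B$ would be negative and attained at an interior point, forcing $w$ to be constant on $B$ and hence, by continuity, on $\overline B$, contradicting $w\ge 0$ on $\partial B$. Thus $w\ge 0$ in $B$, and the liminf bound follows trivially.

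For (ii), the extra regularity $v_0,v_1\in C^2(\overline B)$ together with uniform ellipticity with respect to $v_1$ places us, via Proposition \ref{linearelliptic}(iii), in the setting of a uniformly elliptic operator on $\overline B$ with bounded coefficients and $c\equiv 0$. Since $w(x_1)>0$ and $w$ is continuous, $w$ is positive in a neighborhood of $x_1$ inside $B$; combined with (i), the strong maximum principle promotes this to $w>0$ throughout $B$ (an interior zero of $w$ would force $w\equiv 0$). With $w(a)=0$, $a\in\partial B$, and the ball satisfying the interior sphere condition at $a$ trivially, the generalized Hopf lemma of Appendix A then delivers the strict positivity of $\liminf(v_0-v_1)/\|x-a\|$ along each admissible cone $K(a)$.

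Part (iii) is where I expect the main obstacle: $v_1$ is merely continuous on $\overline B$, so neither uniform ellipticity up to the boundary nor the Hopf lemma is directly accessible. My remedy is to interpose an intermediate $m$-admissible solution. I apply Lemma \ref{satz:zwischenfunktion} with $k=3$, $\alpha=1$ to the pair $(v_1,v_0)$, which is admissible since $v_0\in C^{3,1}(\overline B)$ and $v_1\in C^0(\overline B)$, to obtain $\varphi_2\in C^{3,1}(\overline B)$ with $v_1\le\varphi_2\le v_0$ on $\overline B$, $\varphi_2(a)=v_0(a)$, and $\varphi_2(x_1)<v_0(x_1)$. Because a ball is $(m-1)$-convex with $C^{3,1}$ boundary, Theorem \ref{subsec: cns} supplies an $m$-admissible solution $v_2\in C^{3,\alpha}(\overline B)$ of $F_m(D^2v_2)=f$ with $v_2|_{\partial B}=\varphi_2|_{\partial B}$. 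Two applications of (i) give $v_1\le v_2\le v_0$ in $B$, together with $v_2(a)=v_0(a)$ and $v_2(x_1)<v_0(x_1)$. By Remark \ref{uniformly}, $F_m$ is uniformly elliptic with respect to the $C^{3,1}$-regular $v_0$, so (ii) applies to the pair $(v_0,v_2)$ and yields the strict liminf for $(v_0-v_2)/\|x-a\|$, which transfers at once via $v_0-v_1\ge v_0-v_2$.
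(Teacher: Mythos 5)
Your proof is correct and follows essentially the same strategy as the paper: linearize to $L(v_1,v_0)$ via Proposition~\ref{linearelliptic} and Corollary~\ref{hesseprop}, use the maximum/comparison principles and the Hopf lemma for (i)--(ii), and interpose a smooth solution built from Lemma~\ref{satz:zwischenfunktion} and Theorem~\ref{subsec: cns} for (iii). Your minor variations --- a strong minimum principle contradiction in (i) where the paper invokes the weak comparison principle (Theorem~\ref{weak}), and deducing the final liminf from the pointwise bound $v_0-v_1\ge v_0-v_2$ rather than splitting $v_0-v_1$ as a sum of two differences --- are both equally valid.
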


\begin{proof}
In this proof let $w:=v_1-v_0$ as before. Then, $w(a)=0$, $w(x_1)<0$ and $w \leq 0$ on $\partial B$ by assumption.\\
$(i)$: The linearization $L=L(v_{1},v_{0})$ is, due to Proposition \ref{linearelliptic} $(i)$ and Corollary \ref{hesseprop}, elliptic in $B$ and $Lw=0$ by \eqref{linearoperator}. The comparison principle (Theorem \ref{weak}, here $b \equiv c \equiv 0$) gives us $v_1-v_0=w \leq 0$ in $B$.\\ 
$(ii)$: In this case, Proposition  \ref{linearelliptic} $(iii)$ and Corollary \ref{hesseprop} say that $L(v_{1},v_{0})$ is uniformly elliptic in $B$. By assumption $w(a)=0$, and the strong maximum principle (Theorem \ref{strong}, here $b \equiv c \equiv 0$) yields because of $w\leq 0$ on $\partial B$ and $w(x_1)<0$ that $w=v_1-v_0 <0$ in $B$, so we can invoke the Hopf lemma, Lemma \ref{hopf} and get 
\[
\liminf_{x\to a,\, x\in K(a)\cap B} \frac{w(a) -w(x)}{\| x-a\|}>0 \quad \text{or} \quad \liminf_{x\to a,\, x\in K(a)\cap B}\frac{(v_0-v_1)(x)}{\Vert x-a\Vert} >0. 
\]
$(iii)$: We first observe that $F_m$ is uniformly elliptic in $B$ with regard to $v_0$, see Remark \ref{uniformly}.\\
Since we know by $(i)$ that $v_1 \leq v_0$ in $B$, we can construct, according to Lemma \ref{satz:zwischenfunktion}, an intermediate function $\varphi_2 \in C^{3,1}(\overline B)$ with

\begin{equation} \label{phi2}
 \begin{cases}
 v_1=\varphi_2=v_0 \text{ at } a\\
  v_1<\varphi_2<v_0 \text{ at } x_{1}\\
  v_1 \le \varphi_2 \le v_0 \text{ on } \partial B.
 \end{cases}
 \end{equation} 

By dint of the existence Theorem \ref{subsec: cns}, there exists an $m$-admissible function $u_2\in C^{3,\alpha}(\overline B)$ solving the Dirichlet problem
 \[
   F_m(D^2u_2)= f(x)  \text{ in } B \text{ and }
   u_2=\varphi_2  \text{ on } \partial{B}.
\]
This allows replacing $\varphi_2$ by $u_2$ in $\eqref{phi2}$. Therefore, $v_1$ and $u_2$ satisfy the requirements of item $(i)$, whereas $u_2$ and $v_0$ those of item $(ii)$. We get
\begin{align*}
 \liminf_{x\to a,\, x\in K(a)\cap B}\frac{(v_{0}-v_{1})(x)}{\Vert x-a\Vert} & \\
&\hspace{-3,7cm}\geq  \liminf_{x\to a,\, x\in K(a)\cap B}\frac{(v_{0}-u_{2})(x)}{\Vert x-a\Vert} +  \liminf_{x\to a,\, x\in K(a)\cap B}\frac{(u_{2}-v_{1})(x)}{\Vert x-a\Vert} >0.  \qedhere
\end{align*}
\end{proof}

\section{Removability of singularities} \label{removable}

First, we need some definitions to formulate our conditions appropriately.

\begin{definition}\label{admissible}
Given a domain $\Omega \subset \R^n$ and a relatively closed subset $E \subset \Omega$. Set $\mathcal A:=\{U|\, U \text{ is a connected component of } \Omega \setminus E\}$.\\
We define inductively the following sets:
\begin{align*}
 E_0&:= \partial \Omega, \\
 A_1&:=\{U \in \mathcal A |\, \overline U \cap E_0 \neq \emptyset\},\; E_1:=\{x \in E |\, \exists U \in A_1 \text{ s.t. } x \in \overline U\}, \\ 
 A_{i}&:=\{U \in \mathcal A\setminus \cup^{i-1}_{j=1} A_j |\, \overline U \cap E_{i-1} \neq \emptyset\} \quad (\text{for }i>1),\\
E_i&:=\{x \in E \setminus \cup_{j=1}^{i-1} E_j|\, \exists U \in A_i \text{ s.t. } x \in \overline U\} \quad (\text{for }i>1);
\end{align*}
and we say that such a relatively closed set is {\em admissible}  if 
\[
 \Omega \subset \{x |\, \exists i \in \mathbb N, \,  U \in A_i \text{ s.t. } x \in \overline U\}.
\] 
\end{definition}

\begin{remark}
In particular, we have by definition $E=\bigcup_{i \in \mathbb N} E_i$ for an admissible set $E$.\\
Obviously, every relatively closed set $E \subset \Omega $ with no interior points and such that $\Omega \setminus E$ has only finite many connected components is admissible; e.g., if $E$ has Hausdorff dimension strictly less than $n-1$  because in this case  $\Omega \setminus E$ is connected. \\
A not admissible set is, for example, the union of countable many concentric spheres inside a ball.   
\end{remark}

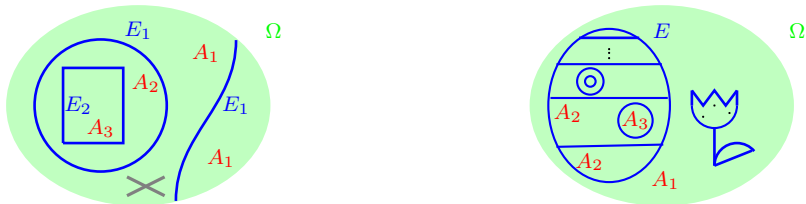
\begin{figure}[htb]
  \centering
  \begin{minipage}[t]{.45\linewidth}
    \centering
     \begin{tikzpicture}
      \fill [green, opacity=0.25] (7,0) ellipse (50pt and 38pt);
      \draw [color=blue] (7,1) node {$\scriptstyle E_{1}$};
      \draw [color=green] (8.8,1) node {$\scriptstyle \Omega$};
      \draw [color=blue] (6.2,0) node {$\scriptstyle E_{2}$};
      \draw [color=blue] (8.3,0) node {$\scriptstyle E_{1}$};
      \draw [color=red] (7.9,0.7) node {$\scriptstyle A_{1}$};
            \draw [color=red] (8.1,-0.7) node {$\scriptstyle A_{1}$};
            \draw [color=red] (7.1,0.3) node {$\scriptstyle A_{2}$};
                  \draw [color=red] (6.5,-0.3) node {$\scriptstyle A_{3}$};
      \draw [color=blue, line width=1pt] (6.5,0) circle (25pt);
      \draw [color=blue, line width=1pt] (6,-0.5)--(6.8,-0.5)--(6.8,0.5)--(6,0.5)--(6,-0.5);
      \draw[color=blue, line width=1pt, out=90,in=-90] (7.5,-1.27) to (8.3,0.87);

\draw[color=gray, line width=1.2pt](6.85,-1.2) - - (7.35,-0.95);
\draw[color=gray, line width=1.2pt](6.85,-0.95) - - (7.35,-1.2);

    \end{tikzpicture}
  \end{minipage}%
  \hfill%
  \begin{minipage}[t]{.45\linewidth}
    \centering
   \begin{tikzpicture}
    \fill [green, opacity=0.25] (7,0) ellipse (50pt and 38pt);
    \draw [color=green] (8.8,1) node {$\scriptstyle \Omega$};
    \draw [color=blue] (7,1) node {$\scriptstyle E$};
    
    \draw [color=blue, line width=0.8pt] (6.3,0) ellipse (23pt and 29pt);
    \draw [color=blue, line width=0.8pt] (5.6,-0.55) -- (7.02,-0.52);
    \draw [color=blue, line width=0.8pt] (5.52,0.1) -- (7.08,0.1);
    \draw [color=blue, line width=0.8pt] (5.6,0.55) -- (7,0.55);
    \draw[fill=blue](6.3,0.75)circle(0.2pt);
    \draw[fill=blue](6.3,0.63)circle(0.2pt);
    \draw[fill=blue](6.3,0.7)circle(0.2pt);
    \draw[color=blue, line width=0.8pt](6.05,0.32) circle(5pt); 
    \draw[color=blue, line width=0.8pt](6.05,0.32) circle(2pt);
                                           \draw[color=blue, line width=0.8pt](6.65,-0.2) circle(6.5pt);
                                                      \draw [color=red] (5.75,-0.1) node {$\scriptstyle A_{2}$};
                                                      
                                                                 \draw [color=red] (6.04,-0.75) node {$\scriptstyle A_{2}$};
                                                                          \draw [color=red] (7.05,-1) node {$\scriptstyle A_{1}$};   
                                                                                                             
                                                                            \draw [color=red] (6.65,-0.2) node {$\scriptstyle A_{3}$};
                                                  
  \draw [>=open triangle 90, color=blue, line width=0.8pt] (7.4,0) arc (180:360:0.3cm); 
\draw[color=blue, line width=1pt] (8,0) -- (8,0.2);
\draw[color=blue, line width=1pt] (7.4,0) -- (7.4,0.2);
\draw[color=blue, line width=1pt] (7.4,0.2) -- (7.55,0) -- (7.7,0.2) -- (7.85,0) -- (8,0.2);
\draw[color=blue, line width=1pt] (7.7,-0.3) -- (7.7,-0.8);
 \draw [>=open triangle 90, color=blue, line width=1pt] (7.7,-0.8) arc (-180:-320:0.3cm);
 \draw[color=blue, line width=1pt] (7.7,-0.8) -- (8.2,-0.6);
 
     \draw[fill=blue](7.9,-0.1)circle(0.2pt);
    \draw[fill=blue](7.55,-0.15)circle(0.2pt);
        \draw[fill=blue](7.7,0)circle(0.2pt);
 
              \draw [color=blue, line width=1pt] (5.9,0.9) -- (6.7,0.9);
  \end{tikzpicture}
  \end{minipage}
  \caption{Two examples of admissible sets}
\end{figure}

\newpage

\begin{definition}\label{defi} \hfill
 \begin{enumerate}
\item We call a sequence $(x_i)_{i \in\mathbb Z} \subset \mathbb R^n$ {\em doubly convergent} to $x$ if 
\[
x_i=x  \text{ iff } i=0   \quad \text{and}   \quad \lim_{i \to \infty} x_i = x = \lim_{i \to - \infty} x_i.
\]
\item A doubly convergent sequence to $x$ is called {\em straight} if the limits exist and
\[ 
x_{-}=\lim_{i \to - \infty} \frac{x_0 -x_i}{\|x_0-x_i\|}=\lim_{i \to  \infty} \frac{x_i -x_0}{\|x_0-x_i\|}=x_+.
\]
\item Given a set $E \subset \R^n$. We say that the sequence $(x_i)_{i \in \mathbb Z} \subset \mathbb R^n$ is {\em semi-transversal} to $E$ at $x \in E$ if
\begin{itemize}
\item $(x_i)_{i \in \mathbb Z}$ is doubly convergent to $x$,
\item there exists a ball $B$ with  $x \in \partial B$, $\overline{B} \cap E = \{x\}$,
\item there exists a closed convex cone $K(x)$ with apex $x$ such that $K(x) \cap B_{\varepsilon}(x) \subset B$ for $\varepsilon>0$ small enough,
\item there exists an $N<0$ with $x_i \in K(x)$ for every $i<N$.
\end{itemize}
Furthermore, if $\Omega \subset \R^n $ is a domain and $E \subset \Omega$ admissible, we say that the sequence $(x_i)_{i \in \mathbb Z} \subset \mathbb R^n$ is {\em outer semi-transversal} to $E$ at $x \in E_i$, $i\geq 1$, if, in addition,
\begin{itemize}
\item $B \subset U$ for one $U \in A_i$. 
 \end{itemize} 
\item A function $u$ defined on the elements of a doubly convergent sequence $(x_i)_{i \in \mathbb Z}$ converging to $x=x_{0}$ is said to be {\em differentiable} with regard to $(x_i)_{i \in \mathbb Z}$ at $x$ if the limits exist and
\[
u_+ =\lim_{i \to \infty} \frac{u(x)-u(x_i)}{\|x-x_i\|}=\lim_{i \to -\infty} \frac{u(x_i)-u(x)}{\|x-x_i\|}=u_-.
\]
\end{enumerate}
\end{definition}

\begin{remark} \label{curvy}
Similarly, one could define the above notions also for a continuous curve.
Then ``straight'' just means that $\gamma$ is  differentiable at $0$ with non-zero tangent vector.\\
We want to  emphasize that $x_i \in E$ for $i\geq 0$ and also  $x_i =x_{-i}$ are allowed in a semi-transversal sequence. In the latter case $u_+=u_-=-u_+=0$ if $u$ is differentiable with regard to $(x_i)_{i \in \mathbb Z}$.
Especially, one can see that a $C^{1}$-function defined in a neighbourhood of $x=x_{0}$ does not need to be differentiable with regard to a doubly convergence sequence unless the sequence is straight, see also Corollary \ref{maincor}.\\
Let us also mention that for $E=\{x\}$ a straight doubly convergent sequence to $x$ is automatically (semi-)transversal to $E$ at $x$ and every doubly convergent sequence to $x$ has  a semi-transversal subsequence.

\end{remark}

Now, we are in the position to state our main theorem. One could vary the assumptions therein a bit to cover other situations. We will briefly indicate these variations afterwards.
 
\begin{theorem} \label{main}
Given a domain $\Omega$ and $E \subset \Omega$ admissible.  Let $u \in C^2(\Omega \setminus E)\cap C(\overline \Omega)$ be an $m$-admissible solution of 
\begin{equation}\tag{\ref{Hessian}}
 F_{m}(D^2u)=S_m(\lambda(D^2u))^{1/m}=f(x) \text{ in } \Omega\setminus E
\end{equation}
with $0< f \in C^{k,\alpha}(\Omega)$, $k \geq 2$, $\alpha \in (0,1)$ such that the Dirichlet problem 
\begin{equation} \tag{\ref{nonsing}}
F_m(D^2v)=f(x) \text{ in } \Omega, \quad  v = \varphi \text{ on } \partial \Omega
\end{equation}
with $\varphi = u_{|\partial \Omega}$ has an $m$-admissible solution $v\in C^2(\Omega)\cap C(\overline \Omega)$ .\\
Then $u$ can be extended to a solution $u \in C^{k+2,\alpha}(\Omega)$ if for every $x \in E$ there exists an outer semi-transversal sequence $(x_i)_{i \in \mathbb Z}$ to $E$ at $x$ such that $u-v$ is differentiable with regard to $(x_i)_{i \in \mathbb Z}$.
\end{theorem}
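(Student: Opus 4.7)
The plan is to show $u \equiv v$ on $\overline{\Omega}$, after which Remark \ref{uniformly} automatically delivers $u \in C^{k+2,\alpha}(\Omega)$. I would establish $u \leq v$ and $u \geq v$ separately by contradiction, using the outer semi-transversal sequence, Lemma \ref{comparison}, and the admissibility of $E$.

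For $u \leq v$, set $M := \sup_{\overline{\Omega}}(u-v)$ and assume $M > 0$. Since $u = v$ on $\partial \Omega$, the maximum set $S = \{x \in \overline{\Omega} : (u-v)(x) = M\}$ lies in $\Omega$. On any component $U \in \mathcal{A}$ of $\Omega \setminus E$, $w = u - v$ satisfies $L(u,v)w = 0$ with $c \equiv 0$ and $L$ locally uniformly elliptic (Proposition \ref{linearelliptic}(ii), Corollary \ref{hesseprop}), so the strong maximum principle forces $\overline{U} \subset S$ whenever $S$ meets $U$. Using the partition $E = \bigcup_i E_i$, choose the minimal $j_0 \geq 1$ with $S \cap E_{j_0} \neq \emptyset$ (which exists, else $S$ would sit in $\Omega \setminus E$ and propagate to a component touching either $\partial \Omega$ or $E$, a contradiction), pick $x_0 \in S \cap E_{j_0}$, and invoke the outer semi-transversal sequence $(x_i)_{i \in \mathbb{Z}}$ at $x_0$, with ball $B \subset U \in A_{j_0}$ and cone $K(x_0)$. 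If $u - v \equiv M$ on $\partial B$, two applications of Lemma \ref{comparison}(i) give $u \equiv v + M$ on $\overline{B}$, the strong maximum principle propagates this to $\overline{U}$, and admissibility supplies a point of $\overline{U} \cap E_{j_0-1}$ (or of $\partial \Omega$ when $j_0 = 1$) that lies in $S$, contradicting either the minimality of $j_0$ or the positivity of $M$. Otherwise there is $x_1 \in \partial B$ with $(u-v)(x_1) < M$, and Lemma \ref{comparison}(iii) applied with $v_1 = u \in C^0(\overline{B}) \cap C^2(B)$ and $v_0 = v + M \in C^{3,1}(\overline{B})$ yields
$$\liminf_{x \to x_0,\, x \in K(x_0) \cap B}\frac{v(x) + M - u(x)}{\|x - x_0\|} > 0.$$
Since $x_i \in K(x_0) \cap B$ for all sufficiently negative $i$ while $(u-v)(x_i) \leq M$ for every $i$, this forces $(u-v)_- < 0 \leq (u-v)_+$, contradicting the assumed differentiability of $u - v$ along $(x_i)_{i \in \mathbb{Z}}$ at $x_0$.

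The reverse inequality $u \geq v$ is analogous but requires one extra twist, because Lemma \ref{comparison}(iii) asks for the upper function to lie in $C^{3,1}(\overline{B})$, whereas for $M' := \sup_{\overline{\Omega}}(v - u) > 0$ the natural upper function $u + M'$ is only continuous on $\overline{B}$. I sidestep this by replaying the intermediate-function construction used inside the proof of Lemma \ref{comparison}(iii): using the smoothness of $v$, build $\varphi_2 = v + \delta' \psi \in C^{3,1}(\overline{B})$ with $\varphi_2(x_0) = v(x_0)$, $\varphi_2(x_1) > v(x_1)$, and $v \leq \varphi_2 \leq u + M'$ on $\overline{B}$; solve $F_m(D^2 u_2) = f$ on $B$ with boundary data $\varphi_2$ via Theorem \ref{subsec: cns}, yielding an $m$-admissible $u_2 \in C^{3,\alpha}(\overline{B})$; then two applications of Lemma \ref{comparison}(i) give $v \leq u_2 \leq u + M'$ on $\overline{B}$, and Lemma \ref{comparison}(ii) applied to $(u_2, v)$ (using that $F_m$ is uniformly elliptic with respect to the smooth solution $v$ by Remark \ref{uniformly}) produces the Hopf estimate for $u_2 - v$. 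Combined with the nonnegative liminf of $(u + M' - u_2)/\|x - x_0\|$ from Lemma \ref{comparison}(i), this yields the Hopf estimate for $u + M' - v$ along $K(x_0) \cap B$ and the same contradiction with the differentiability of $u - v$ at $x_0$.

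I expect the main obstacle to be precisely this regularity asymmetry between $u$ and $v$, which forces the intermediate-function detour in one of the two directions; the admissibility bookkeeping through $A_i$ and $E_i$ is essentially a device to chain the single-ball Hopf argument back to $\partial \Omega$ and is routine once the ball-level argument is in place.
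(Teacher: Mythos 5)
Your proposal is correct and follows essentially the same route as the paper: locate the maximum of $u-v$, use the strong maximum principle together with the $A_i/E_i$ stratification of the admissible set $E$ to push it onto a point of $E$, and then derive a contradiction there by combining the Hopf-type estimate of Lemma~\ref{comparison}(iii) on the touching ball of the outer semi-transversal sequence with the assumed two-sided differentiability of $u-v$ along that sequence. Your ``choose minimal $j_0$ with $S\cap E_{j_0}\neq\emptyset$'' is just a reformulation of the paper's induction over $A_i$ and $E_i$, and the auxiliary case ``$u-v\equiv M$ on $\partial B$'' that you handle is in fact vacuous: once $S$ is disjoint from every $U\in A_{j_0}$ (which your minimality choice already forces, since $\overline U\subset S$ would put a point of $E_{j_0-1}$, or of $\partial\Omega$, into $S$), and since $\partial B\setminus\{x_0\}\subset U$, one automatically has $u-v<M$ on $\partial B\setminus\{x_0\}$.

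The one place where you add something of substance is the reverse inequality $u\geq v$. The paper dispatches this direction with ``can be shown analogously'', but Lemma~\ref{comparison}(iii) as stated requires the \emph{upper} function $v_0$ to be $C^{3,1}(\overline B)$, and in the reverse direction the natural upper function is $u+M'$, which is only $C^0(\overline B)\cap C^2(B)$, while the smooth function $v-M'$ (or $v$) sits \emph{below}. You correctly spot this asymmetry and repair it by running the intermediate-function construction from below, taking $\varphi_2=v+\delta'\psi\in C^{3,1}(\overline B)$, solving the Dirichlet problem to get $u_2$, and then chaining Lemma~\ref{comparison}(ii) for the pair $(u_2,v)$ with Lemma~\ref{comparison}(i) for the pair $(u+M',u_2)$. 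This is precisely the mirror of Lemma~\ref{comparison}(iii) that the paper's ``analogously'' silently relies on, and it is worth making explicit.
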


\begin{proof}
 For sake of clarity we will first assume $\Omega= \bigcup_{A_1} \cup E_1$ with $A_1$ and $E_1$ as in Definition \ref{admissible}, i.e. $E=E_1$ and $\Omega \setminus E=\bigcup_{A_1}$.\\
We prove $u=v$ in $\Omega$ by contradiction, where $v  \in C^{k+2,\alpha}(\Omega)$ follows from the inner regularity, see Remarks \ref{uniformly}.\\
{\bf Step $1$:} Suppose there exists $x \in \Omega$ with $u(x)>v(x)$. Then, the function $w=u-v \in C(\overline \Omega)$ attains its maximum, $\varepsilon >0$, at some point $p\in \overline \Omega$. Write $P:=\{p \in  \overline \Omega|\,w(p)=\varepsilon \}$. Then, 
\[
w(x)=u(x)-v(x) \leq \varepsilon = u(p)-v(p)=w(p)   \text{ for all } x \in \overline \Omega,\, p \in P.
\]
{\bf Step $2$:} Since $ \partial \Omega \cap P= \emptyset$ 
\begin{equation} \label{case}
 \qquad \mathbf{a)} (\Omega \setminus E) \cap P \neq \emptyset \qquad  \text{ or } \qquad   
\mathbf{b)}\;  E \cap P \neq \emptyset.
\end{equation}
We will exclude both cases and thereby achieve $u \leq v$. $u\geq v$ can be shown analogously.\\
{\bf a)} By Proposition \ref{linearelliptic} and Corollary \ref{hesseprop} the linearization $L(u,v)$ is locally uniformly elliptic and the strong maximum principle (Theorem \ref{strong}, here $b\equiv c \equiv 0$) applied to $w$, remind $L(u,v)w=0$, yields that $w \equiv \varepsilon >0$ is constant on every component $U \in A_1$ that contains some $p\in P$. But $w \in C(\overline \Omega)$, $w=0$ on $\partial \Omega$ and $\overline U \cap \partial \Omega \neq \emptyset$ for $U \in A_1$. This forbids case $a)$.\\
{\bf b)} If $ p\in E \cap P$, then there exists by assumption an (outer) semi-transversal sequence $(x_i)_{i \in \mathbb Z}$ to $E$ at $p$ such that $u-v$ is differentiable with regard to $(x_i)_{i \in \mathbb Z}$. In particular, there is a ball $B \subset \Omega \setminus E$ with  $\overline B \cap E = \{p\}$ and a cone $K(p)$ as in the comparison Lemma $\ref{comparison}$ such that $x_i \in B \cap K(p)$ for $i<0$. We can choose $B$ such that $B\subset\subset \Omega$. \\
Set  $v_1 = u_{|\overline B}-\varepsilon \in C^2(B)\cap C(\overline B)$ and $v_0=v_{|\overline B}\in C^{k+2,\alpha}(\overline B)$. Observe that $v_0$, but also $v_1$ is a solution of  
\[
 F_{m}(D^{2}u)=f(x)  \text{ in } B.
\]
Furthermore, $v_0 > v_1$ on $\overline B \setminus \{p\} $ as proved in $a)$ and $v_0(p)=v_1(p)$. $v_0 \in C^{k+2,\alpha}$ gives $v_0 \in C^{3,1}(\overline B)$ as $k\geq2$. Hence, Lemma $\ref{comparison}$ $(iii)$ yields 
\[
\liminf_{i \to - \infty} \frac{(v+ \varepsilon - u)(x_i)}{\Vert x_{i}-p\Vert}=\liminf_{i \to - \infty} \frac{(v_0- v_1)(x_i)}{\Vert x_{i}-p\Vert}>0.
\]
But $\hat w:= v+\varepsilon -u \geq 0$ is by assumption differentiable with regard to $(x_i)_{i \in \mathbb Z}$ and $\hat w$ has a minimum at $p$. This means
\[
0\ge \hat w_+ =\lim_{i \to \infty} \frac{\hat w(p)- \hat w(x_i)}{\|p-x_i\|}=\lim_{i \to -\infty} \frac{\hat w(x_i)-\hat w(p)}{\|p-x_i\|}=\hat w_-\ge 0,
\]
and we arrive at the contradiction
\[
0=\lim_{i \to -\infty} \frac{\hat w(x_i)-\hat w(p)}{\|p-x_i\|}=\liminf_{i \to - \infty} \frac{( v+\varepsilon -u )(x_i)}{\Vert x_{i}-p\Vert} >0.
\]
This finishes the proof under the additional assumption $\Omega= \bigcup_{A_1} \cup E_1$.

Now, we drop this request and allow  $E$ to be an arbitrary admissible set. Then, the proof goes by induction. Step 1 is the same. But instead of only two possibilities for a $p\in P$ to lie in as in \eqref{case} we conclude, since $E$ is admissible:\\
There is an $i \in \mathbb N$ such that there exists  $U \in A_i$ with $U \cap P \neq \emptyset$ or $ E_i \cap  P \neq \emptyset$.\\ 
The case $i=1$ is already treated above. One only has to observe that the ball used in part $b)$ is contained in a component $U \in A_1$ because the sequence $(x_i)_{i \in \mathbb Z}$ is by assumption outer semi-transversal to $E$ at $p$.\\
Assume we have already excluded the cases $j \leq i$. Hence, especially $  E_i \cap P = \emptyset$. Then a similar argument as in $a)$ leads to $  U \cap P = \emptyset$ for $U \in A_{i+1}$ since by definition $\overline U \cap E_i \not = \emptyset$ and we already know that the maximum is not attained at any point of $E_i$.\\
Hence, $ U \cap  P = \emptyset$ for every $U \in A_{i+1}$. But then, by the same arguments as in $b)$, the existence of a $p \in E_{i+1} \cap P$ leads to a contradiction  because of the existence of an outer semi-transversal sequence to $E$ at $p$, i.e. a semi-transversal sequence where the requested ball lies in some $U \in A_{i+1}$.   
\end{proof}

\begin{remark}\label{mainremarks}
The proof can easily be adjusted if we demand only that for every $i \geq 2$ and every $U\in A_i$ there exists at least one $x \in \overline U \cap E_{i-1}$ where the requested semi-transversal sequence is outer semi-transversal. This includes for example a tulip like the one in Figure 1 into our approach. Whereas the existence of a semi-transversal sequence for every $x\in E$, in particular a ball $B$ with  $\overline B \cap E= \{x\}$ seems to be fundamental for our approach. Thus, e.g. a cross as in Figure 1 defies our efforts to remove it.\\
Nevertheless, contemplating on Remarks \ref{curvy} one can see that the general formulation here gives some new insight even in the case of a point singularity; it implies, for example, that if $u=v$ on a sequence converging to the point singularity then $u \equiv v$ on the whole of $\Omega$.\\ 
Of course, one can also apply the theorem if there is an $\Omega' \subset \Omega$ with $E \subset \Omega'$ such that the requirements of the theorem for $\Omega$ are satisfied for $\Omega'$; e.g. if $\Omega' \subset \subset \Omega$ is an $(m-1)$-convex domain with $C^{3,1}$-boundary and $E\subset \subset \Omega'$, then the existence of $v$ is already guaranteed by Theorem \ref{subsec: cns}. It would also be  enough that $E$ could be decomposed such that for every component there exists such an $\Omega'$.\\
Following Remarks \ref{uniformly}, one could weaken the regularity assumptions in case of the Monge-Amp\`ere equation.
\end{remark}

The constraint that $u-v$ has to be differentiable is a bit annoying because it depends on the presumed classical solution $v$. We can remedy the situation by demanding a little bit more.
\begin{corollary} \label{maincor}
With the same assumptions and notations as in Theorem \ref{main} it also holds true that $u$ can be extended to a solution $u \in C^{k+2,\alpha}(\Omega)$ if for every $x \in E$ there exists a straight outer semi-transversal sequence $(x_i)_{i \in \mathbb Z}$ to $E$ at $x$ such that $u$ is differentiable with regard to $(x_i)_{i \in \mathbb Z}$.
\end{corollary}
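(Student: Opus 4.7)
The plan is to reduce the corollary to Theorem \ref{main}. The only change in the hypotheses is that we now ask $u$ itself (rather than $u-v$) to be differentiable with regard to the semi-transversal sequence, at the cost of requiring the sequence to be straight. Thus it suffices to show that on any straight doubly convergent sequence the classical solution $v$ is automatically differentiable in the sense of Definition \ref{defi}(4); then the linearity of the defining limits transfers the hypothesis on $u$ to $u-v$, and Theorem \ref{main} applies verbatim.

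First I would recall from Remark \ref{uniformly} that the $m$-admissible classical solution $v$ belongs to $C^{k+2,\alpha}(\Omega)$, and in particular is $C^1$ at every $x\in E$. Fix such an $x$ and let $(x_i)_{i\in\mathbb Z}$ be the straight outer semi-transversal sequence provided by the hypothesis, with common limit direction $x_+ = x_-$. Using the classical differentiability of $v$ at $x$, I would expand $v(x_i) - v(x) = \nabla v(x)\cdot(x_i-x) + o(\|x_i-x\|)$ and obtain
\[
\lim_{i\to\infty}\frac{v(x)-v(x_i)}{\|x-x_i\|} \;=\; -\nabla v(x)\cdot x_+ \;=\; -\nabla v(x)\cdot x_- \;=\; \lim_{i\to-\infty}\frac{v(x_i)-v(x)}{\|x-x_i\|},
\]
where the middle equality is the straightness condition (and the two outer equalities follow from $\lim_{i\to\pm\infty}\|x_i-x\|^{-1}(x_i-x) = \pm x_\pm$). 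Hence $v$ is differentiable with regard to $(x_i)$ at $x$.

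Since differentiability with regard to $(x_i)$ is defined through pairs of one-sided limits that are required to exist and coincide, it is stable under subtraction: combining the hypothesis on $u$ with the observation just made for $v$, the function $u-v$ is differentiable with regard to $(x_i)$ at every $x\in E$. All hypotheses of Theorem \ref{main} are thus satisfied, which yields $u\equiv v$ on $\Omega$ and the desired extension $u\in C^{k+2,\alpha}(\Omega)$. I do not foresee any genuine obstacle here; the substance of the corollary is the elementary observation that straightness of the sequence is exactly what a $C^1$ function needs in order to be differentiable along it in the sense of Definition \ref{defi}(4), and this is precisely why the main theorem could afford to drop straightness by working with the difference $u-v$ and placing the entire regularity burden on that single hypothesis.
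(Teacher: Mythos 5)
Your proposal is correct and follows essentially the same route as the paper: show that any $C^1$ function (in particular $v$) is automatically differentiable in the sense of Definition \ref{defi}(4) along a \emph{straight} doubly convergent sequence via $v_\pm = -\nabla v(x)\cdot x_\pm$, then pass to $u-v$ by linearity and invoke Theorem \ref{main}. You merely spell out the Taylor expansion and the subtraction step a bit more explicitly than the paper does.
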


\begin{proof}
We only have to prove that for a straight doubly convergent sequence $(x_i)_{i \in \mathbb Z}$ a function $v \in C^1(\Omega)$ is automatically differentiable with regard to this sequence. This follows from
\[
v_+=-\nabla v(x) \cdot x_+=\nabla v(x) \cdot (-x_-)=v_-,
\]
using the notations of Definitions \ref{defi}. 
\end{proof}

\begin{remark}
The examples in \cite{C} based on \cite{P} demonstrate that, in general, one has to assume the existence of the classical solution $v$, at least if $E$ is only relatively closed and not compactly contained in $\Omega$. But one can also read it the other way, i.e. that one cannot expect a classical solution for the Monge-Amp\`ere Dirichlet problem if the boundary value $\varphi$ is of regularity less than $C^{1,1-2/n}$, even for a ball and analytic right hand side. So one purpose of this kind of result could be in the negative, i.e. to show nonexistence of classical solutions in a similar way for other equations. More about the existence of non-classical solutions for $m$-Hessian equations can be found in \cite{U}.   
\end{remark}
 
Now, we come to our main example, already mentioned in the introduction, when $E$ is a subset of a $C^{1,1}$-submanifold.
\begin{corollary} \label{intro}
  Let $u\in C^{2}(\Omega\setminus E) \cap  C( \overline \Omega)$ be an $m$-admissible solution of equation \eqref{Hessian} such that the Dirichlet problem \eqref{nonsing}
 has an $m$-admissible solution $v\in C^2(\Omega)\cap C(\overline \Omega)$ with $\varphi=u_{|\partial \Omega}$. Let $E \subset \subset \Omega$ be a closed subset of a $C^{1,1}$-submanifold, $M$, of $\mathbb R^{n}$ of dimension $l<n$. Then $E$ is removable, i.e. $(v=)u\in C^2(\Omega)$, if for every $x \in E$ there is one continuous curve $\gamma$ through $x$, which is differentiable at $x$ and transversal (i.e. not tangential) to $M$ at $x$, such that $u$ is differentiable at $x$ along $\gamma$.
\end{corollary}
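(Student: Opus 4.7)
My plan is to reduce Corollary \ref{intro} to Corollary \ref{maincor}; the PDE-theoretic hypotheses of Theorem \ref{main} are already among those of the present statement, so what remains is purely geometric: I have to verify that $E$ is admissible in the sense of Definition \ref{admissible}, and to produce, at each $x \in E$, a straight outer semi-transversal sequence $(x_i)_{i\in\mathbb{Z}}$ to $E$ at $x$ along which $u$ is differentiable.

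For admissibility, the case $l\leq n-2$ is immediate because then $\Omega\setminus E$ is connected, so $A_1=\{\Omega\setminus E\}$ and $E_1=E$. In the case $l=n-1$ the $C^{1,1}$-submanifold $M$ separates each sufficiently small ball around a point of $M$ into exactly two open sides, both contained in $\Omega\setminus E$; therefore every $x\in E$ lies in the closure of some component of $\Omega\setminus E$. Starting from the components whose closure meets $\partial\Omega$ and iterating Definition \ref{admissible}, the compactness $E\subset\subset\Omega$ ensures that every component is eventually enumerated in some $A_i$ and every point of $E$ in some $E_i$.

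For the sequence, I fix $x\in E_i$ together with a witness $U\in A_i$. Appendix B furnishes, for any prescribed unit normal $n$ to $M$ at $x$, a closed ball $\overline{B_r(x+rn)}$ touching $M$ only at $x$ for all sufficiently small $r$; since $E\subset M$ this ball then meets $E$ only at $x$. I choose the sign of $n$ so that the ball lies in $U$: for $l\leq n-2$ this is automatic because $U=\Omega\setminus E$ is the unique component, while for $l=n-1$ I pick the side of $M$ which locally belongs to $U$. Write $w:=\gamma'(0)\ne 0$; transversality gives $\langle w,n\rangle\ne 0$, and after reversing the parametrization of $\gamma$ if necessary I may assume $\langle w,n\rangle>0$. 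Define $x_0:=x$ and $x_i:=\gamma(-1/i)$ for $i\in\mathbb{Z}\setminus\{0\}$, so that $t_i:=-1/i$ is positive for $i<0$ and negative for $i>0$. A short computation using $\gamma(t)=x+tw+o(t)$ gives $x_-=x_+=-w/\|w\|$, so the sequence is straight. Taking a closed convex cone $K(x)$ of directions strictly inside the open half-space $\{v:\langle v,n\rangle>0\}$ and containing $w/\|w\|$ in its interior, one has $K(x)\cap B_\varepsilon(x)\subset B_r(x+rn)$ for $\varepsilon$ small and $x_i\in K(x)$ for all sufficiently negative $i$, so the sequence is outer semi-transversal to $E$ at $x$.

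Differentiability of $u$ along $\gamma$ at $x$ means that $c:=(u\circ\gamma)'(0)$ exists; combining this with $\|x_i-x\|=|t_i|\|w\|+o(t_i)$ and the above sign conventions yields $u_-=u_+=c/\|w\|$, so $u$ is differentiable with regard to $(x_i)_{i\in\mathbb{Z}}$. Corollary \ref{maincor} then upgrades $u$ to a classical $C^{k+2,\alpha}$-solution on $\Omega$ coinciding with $v$. The main obstacle I anticipate is the $l=n-1$ case of the geometric setup: ensuring that the touching ball from Appendix B can always be placed inside the prescribed component $U\in A_i$, rather than on the wrong side of $M$. This requires coupling the side-of-$M$ choice with the local two-sided structure of $\Omega\setminus E$ near $M$, and it is the only place where the combinatorics of admissible sets genuinely interacts with the geometry of $M$.
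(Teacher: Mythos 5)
Your route---reduce to Corollary \ref{maincor} by constructing a straight outer semi-transversal sequence along $\gamma$, using the touching ball from Appendix B---is the same as the paper's, and your straightness and differentiability computations and the reduction to Corollary \ref{maincor} are correct. But there is a genuine gap in the choice of the normal vector $n$. You fix $n$ as a unit normal to $M$ at $x$ (choosing only its sign) and then conclude $\langle w,n\rangle\ne 0$ from transversality, where $w=\gamma'(0)$. This is correct only when $l=n-1$, where $N_xM$ is a line. For $l\le n-2$ the normal space has dimension at least $2$, and $w\notin T_xM$ does \emph{not} rule out $\langle w,n\rangle=0$ for an arbitrarily chosen $n\in N_xM$. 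In that case the directions $(\gamma(t_i)-x)/\|\gamma(t_i)-x\|$ tend to $\pm w/\|w\|$, which is orthogonal to the inner normal of $\partial B_r(x+rn)$ at $x$; since every admissible cone $K(x)$ is contained in some $K_\delta(x)$ of aperture $\pi/2-\delta$ about that inner normal, the tail of the sequence eventually leaves any such cone, and semi-transversality fails. The paper sidesteps this by taking $n$ (its $\mathbf n_1$) to be the normalized orthogonal projection of $w=t_\gamma(x)$ onto $N_xM$---nonzero precisely by transversality, and with $\langle w,n\rangle=\|\mathrm{proj}_{N_xM}w\|>0$ automatically; you should do the same.

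A secondary weakness is the admissibility argument when $l=n-1$: compactness of $E$ alone does not imply admissibility, as the paper's example of countably many concentric spheres (which is compactly contained in a ball) shows. What the paper actually proves is that $\Omega\setminus E$ has only \emph{finitely many} components: if it had infinitely many, compactness of $E$ would give an accumulation point $y\in E\subset M$ every neighbourhood of which meets infinitely many components of $\Omega\setminus E$, hence infinitely many components of $\Omega\setminus M$; but a $C^{1,1}$-hypersurface cuts every sufficiently small ball about $y$ into exactly two pieces, a contradiction. Finiteness of components together with $E$ having empty interior then yields admissibility, as noted in the remark after Definition \ref{admissible}. Your two-sided-separation observation is the right local ingredient, but it needs to be turned into this finiteness statement; the appeal to ``compactness ensures'' does not by itself close the gap.
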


\begin{remark}
Instead of $E \subset \subset \Omega$ we could again ask for $E$ just relatively closed in $\Omega$, but then we have to add the assumption of admissibility of $E$. 
\end{remark}

\begin{proof}
We start by showing that $E$ is admissible. To this end, we prove that $\Omega\setminus E$ has only finitely many connected components. Otherwise one could find infinitely many points of $E$ lying on the boundaries of infinitely many different connected components. But then there would exists an accumulation point, $y \in E$,  every whose neighborhood intersecting infinitely many connected components of $\Omega \setminus E$ and therefore also of $\Omega \setminus M$ in contradiction to $y \in M$.\\
Let $x \in E$. By assumption, there is a continuous curve, $\gamma: (-\delta,\delta) \mapsto \R^n$ with $\gamma(0)=x$ such that the derivative, $\gamma'(0)=t_\gamma(x)$, exists at $x$ with $t_\gamma(x) \not \in T_M(x)$ (the tangent space of $M$ at $x$), and $u$ is differentiable at $x$ along $\gamma$. \\
It remains to prove the existence of a straight outer semi-transversal sequence to $E$ at $x$. Project $t_\gamma(x)$ orthogonally to the normal space of $M$ at $x$ and let ${\bf n}_1 \neq 0$ be its image. By Proposition \ref{ball} there exist a ball $B^+$ and a ball $B^-$ such that $\overline B^+ \cap M= \overline B^- \cap M =\{x\}$ and ${\bf n}_1$ is an inner resp. outer normal vector of $\partial B^+$ resp. $\partial B^-$ at $x$. We choose a sequence of the form $(\gamma(t_i))_{i \in \mathbb Z}$ with $t_0=0$, $t_i <0$ for $i<0$ and $t_i >0$ for $i > 0$ or vice versa, and $\lim_{ i \to \infty} t_i =0$, $\lim_{ i \to - \infty} t_i =0$, $t_i$ small enough for all $i \in \mathbb Z$. It is easy to check that such a sequence is indeed straight and outer semi-transversal and that $u$ is differentiable with respect to it since $u$ is differentiable at $x$ along $\gamma$ and $t_\gamma(x)\not =0$. 
\end{proof}

\subsection{General equations}  \label{general}
Let $\Omega$ be a domain and $E\subset \Omega$ admissible. In this subsection we come back to a general function, $F \in C^{k,\alpha}(\Omega, \mathbb R, \mathbb R^n, \mathbb R^{n \times n})$, $k\geq 1$, $0 < \alpha <1$, and the corresponding second order partial differential operators: 
\[
F(x,z,p,r): (\Omega, \mathbb R, \mathbb R^n, \mathbb R^{n \times n}) \mapsto \mathbb R; \quad F[u]=F(x,u,\nabla u, D^2u).
\]
{\bf A)} Our first assumption is that for every open $\Omega' \subseteq \Omega$ there exists a convex set $\mathcal U(\Omega') \subseteq C^2(\Omega')$ for whose elements the operator $F$ becomes elliptic; of course, one could and should always demand that $\mathcal U(\Omega') \subset \mathcal U(\Omega'')$  if $\Omega'' \subset \Omega'$ (with the natural identification by restriction).\\
We call a function $v \in C^2(\Omega)$ $F$-\emph{admissible} in $\Omega$ (in an open subset, $O \subset \Omega$) if $v_{|\Omega'} \in \mathcal U(\Omega')$ for every $\Omega' \subseteq \Omega$ ($\Omega' \subseteq O$).\\[1mm]
Let $u \in C^2(\Omega \setminus E) \cap C(\overline \Omega)$ be an $F$-admissible solution of $F[u]=0$ in $\Omega \setminus E$. Assume also that there is an $F$-admissible solution $v \in C^2(\Omega) \cap C( \overline \Omega)$ of the Dirichlet problem
\[
 F[v]=F(x,v,\nabla v, D^2v)= 0 \text{ in } \Omega, \quad v=u \text{ on } \partial \Omega.
\]
We have $u \in C^{k+2,\alpha}(\Omega \setminus E)$ and $v \in C^{k+2,\alpha}(\Omega)$ by Remark \ref{uniformly}.\\ 
We want to establish our main Theorem in this general setting, too. Scrutinizing the proofs of Theorem \ref{main} and Lemma \ref{comparison} we see that, using Proposition \ref{linearelliptic}, the only additional assumptions we have to impose are: \\[1mm]
 \textbf{B)} $\frac{\partial F(x,z,p,r)}{\partial z}=F_z \leq 0$ and, hence, $F$ is decreasing in $z$.\\[1mm] 
\textbf{C)} For every $a \in E$ there exists an $F$-admissible solution, $u_2 \in C^2(\overline B)$, of the Dirichlet problem
\[
F[u_2]=0 \text{ in } B,\quad u_2=\varphi_{2} \quad \text {for each } \quad \varphi_{2}\in C^{k+2,\alpha}(\partial B),
\]
where $B$ is a ball, requested in the definition of a semi-transversal sequence to $E$ at $a$; of course, we can always choose these balls as small as we want.\\[1mm]
Assumption B is needed to apply the strong maximum and the comparison principle. Because $c \not \equiv 0$ is now possible, one has to observe in the application of the Hopf lemma that $w(a)=0$ in the proof of Lemma \ref{comparison} $(iii)$. At one further instance one has to be a bit cautious; i.e. that the function $v_1 = u_{|\overline B}-\varepsilon$ which appears in the proof of the main Theorem is not necessarily a solution of $F[v_1]=0$ in $B$ when $F(x,u,\nabla u, D^2u)$ depends also on $u$, but with assumption B it is a subsolution, $F[v_1]\geq 0$. Hence, $w=v_1-u_2$ (with $u_2$ as in the proof of Lemma \ref{comparison} $(iii)$) is a subsolution of $L(v_1,u_2) w \geq 0$ and we can apply the comparison lemma as before in the proof of Lemma \ref{comparison}.\\
We want to summarize our analysis of the general case in the following theorem. 

\begin{theorem} \label{generalmain}
Let there be given a domain $\Omega$ and $E \subset \Omega$ admissible. Assume $F \in C^{k,\alpha}(\Omega, \mathbb R, \mathbb R^n, \mathbb R^{n \times n})$, $k\geq 1$, $0 < \alpha <1$, satisfies assumptions \emph{A, B and C}. Let $u \in C^2(\Omega \setminus E)\cap C(\overline \Omega)$ be an $F$-admissible solution of 
\begin{equation}
 F[u]=F(x,u,\nabla u, D^2u)= 0 \text{ in } \Omega\setminus E
\end{equation}
such that the Dirichlet problem 
\begin{equation}
F[v]=0 \text{ in } \Omega, \quad  v = \varphi \text{ on } \partial \Omega
\end{equation}
with $\varphi = u_{|\partial \Omega}$ has an $F$-admissible solution $v\in C^2(\Omega)\cap C(\overline \Omega)$.\\
Then $u$ can be extended to a solution $u \in C^{k+2,\alpha}(\Omega)$ if for every $x \in E$ there exists an outer semi-transversal sequence $(x_i)_{i \in \mathbb Z}$ to $E$ at $x$ such that $u-v$ is differentiable with regard to $(x_i)_{i \in \mathbb Z}$.
\end{theorem}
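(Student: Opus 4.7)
The plan is to run the contradiction argument of Theorem \ref{main} verbatim, substituting the general ingredients A, B, C wherever the $m$-Hessian-specific ones were used, and modifying Lemma \ref{comparison}(iii) to accommodate the fact that our lower function is now only a subsolution and the linearization carries a non-trivial zero-order term. First I set $w = u - v$ and suppose toward contradiction that the continuous function $w$ attains a positive maximum $\varepsilon > 0$ on $\overline{\Omega}$, so the set $P = \{p \in \overline{\Omega} \mid w(p) = \varepsilon\}$ is non-empty and, since $u = v$ on $\partial \Omega$, disjoint from $\partial \Omega$. As in Theorem \ref{main}, I then organize the argument by induction over the levels $E_i$ of the admissible set $E$, treating the base case $\Omega = \bigcup_{A_1} \cup E_1$ first.

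By assumption A together with Proposition \ref{linearelliptic}(i)--(ii), the linearization $L(u,v)$ is locally uniformly elliptic with locally bounded coefficients on $\Omega \setminus E$, and by assumption B and Proposition \ref{linearelliptic}(iv) its zero-order coefficient $c$ is non-positive. Hence the strong maximum principle (Theorem \ref{strong}) and the weak comparison principle remain applicable to $w$ at the non-negative maximum $\varepsilon$. This excludes the case $p \in P \cap (\Omega \setminus E)$ exactly as before: $w$ would be constant $\varepsilon$ on the component $U \in A_1$ through $p$, contradicting $w = 0$ on $\overline{U} \cap \partial \Omega$. The remaining case $p \in E \cap P$ is the delicate one. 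The outer semi-transversal sequence at $p$ supplies a ball $B \subset U$ with $U \in A_1$, $\overline{B} \cap E = \{p\}$, and a cone $K(p)$ containing the tail $\{x_i\}_{i<N}$. I set $v_0 = v|_{\overline{B}}$ and $v_1 = u|_{\overline{B}} - \varepsilon$; by assumption B the latter is now only a subsolution, $F[v_1] \geq F[u] = 0$. By assumption C, after possibly shrinking $B$, for every boundary datum $\varphi_2 \in C^{k+2,\alpha}(\partial B)$ there is an $F$-admissible classical solution $u_2 \in C^2(\overline{B})$, which I choose via Lemma \ref{satz:zwischenfunktion} to satisfy $v_1 \leq u_2 \leq v_0$ on $\partial B$ with equality at $p$ and strict inequalities at some $x_1 \in \partial B$ where $w(x_1) < 0$.

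A two-step comparison now yields the contradiction. The linearization $L(v_1, u_2)$ has $c \leq 0$ and gives $L(v_1, u_2)(v_1 - u_2) \geq 0$ with $v_1 \leq u_2$ on $\partial B$, so the comparison principle forces $v_1 \leq u_2$ in $B$; the pair $(u_2, v_0)$ consists of two $F$-admissible classical solutions, for which Remark \ref{uniformly} ensures uniform ellipticity, so the Hopf lemma (Lemma \ref{hopf}) applies at $p$ since $(v_0 - u_2)(p) = 0$ and $c \leq 0$. Summing the two $\liminf$ estimates along the cone $K(p)$ produces
\[
\liminf_{i \to -\infty} \frac{(v + \varepsilon - u)(x_i)}{\|x_i - p\|} > 0,
\]
which directly contradicts the assumed differentiability of $u - v$ with respect to $(x_i)_{i \in \mathbb{Z}}$ at $p$. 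The induction on the level sets $E_i$ then carries over unchanged, with outer semi-transversality guaranteeing at each level that the required ball sits inside some $U \in A_i$. The chief obstacle is precisely this revisiting of Lemma \ref{comparison}(iii): one must verify that the two-step sandwich argument survives the weakening from a solution to a subsolution and the appearance of a non-trivial $c \leq 0$. This is secured exactly by assumption B (which gives the sign of $c$ and the subsolution property of $v_1$) and by assumption C (which replaces the $m$-Hessian existence Theorem \ref{subsec: cns} in producing the intermediate classical solution $u_2$ on small balls).
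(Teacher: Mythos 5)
Your proposal reproduces the paper's own adaptation: run the contradiction argument of Theorem \ref{main} with Proposition \ref{linearelliptic} supplying local uniform ellipticity and sign of $c$ (via assumption B), treat $v_1=u-\varepsilon$ as a subsolution, use assumption C in place of the $m$-Hessian existence theorem to produce the intermediate solution $u_2$, and close by the two-step comparison/Hopf argument of Lemma \ref{comparison}(iii). This is exactly the route the paper indicates in Subsection \ref{general}.

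One small slip worth fixing: you invoke Remark \ref{uniformly} to obtain uniform ellipticity of the linearization for the pair $(u_2,v_0)$, but that remark records an $m$-Hessian-specific fact (that $F_m$ is automatically uniformly elliptic with respect to the solution from Theorem \ref{subsec: cns}) and does not transfer to a general $F$. In the general setting the uniform ellipticity on $\overline B$ comes instead from Proposition \ref{linearelliptic}(iii): since $v_0=v_{|\overline B}$ is the restriction of a function that is $F$-admissible on all of $\Omega$ and $\overline B\subset\subset\Omega$ is compact, $a^{v_0}_{ij}$ is positive definite on $\overline B$, which is precisely the hypothesis of Proposition \ref{linearelliptic}(iii). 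With that reference replaced, the argument is exactly the paper's.
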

\begin{remark}
Of course, the remarks we made in the $m$-Hessian case remain valid and, in particular, Corollaries \ref{maincor} and \ref{intro}.\\
Examples can be found, for example, in \cite{CNS} and \cite{T}.
\end{remark}
\appendix 

\section{A Hopf lemma} \label{Hopf}

In this appendix we want to formulate and prove a generalization of the usual Hopf lemma, see \cite{GT} Lemma $3.4.$ and the remark thereafter, which is essential in our proofs.\\
We consider a second order differential operator of the form
\[
Lu = a_{ij}(x)D_{ij}u + b_i(x)D_iu+c(x)u, \quad a_{ij}=a_{ji},
\]
where $x$ lies in a domain $\Omega \subset \R^n$.\\
Let $\lambda(x)$ denote the smallest and $\Lambda(x)$ the greatest eigenvalue of $a_{ij}(x)$.\\ $L$ is {\em elliptic} in $\Omega$ if $\lambda(x)>0$ for every $x \in \Omega$, and {\em uniformly elliptic} if $\frac{\Lambda(x)}{\lambda(x)}$ is bounded in $\Omega$ what is especially the case if  $0<\lambda_0 \leq \lambda(x) \leq \Lambda(x) \leq \Lambda_0$.\\
For a ball $B \subset \R^n$ and a point $x_0 \in \partial B$ we henceforth denote with $K(x_0)$ a closed convex cone with apex $x_0$ such that $K(x_0) \cap B_\varepsilon(x_0) \subset B$ for $\varepsilon >0$ small enough. Such a cone is always contained in a cone with apex $x_0$ and aperture $\pi /2 - \delta$ of the form 
\[
K_\delta(x_0):=\{x|\arccos \frac{\langle x_0-x,x_0-y \rangle}{\|x_0-x\| \|x_0-y \|}\le \frac{\pi}{2}-\delta \},                                                                                                                                                                              
 \] 
where $y$ is the center of the ball $B$ and $0 < \delta \leq \pi/2$, i.e. $K_\delta(x_0)$ is symmetric with regard to the inner normal vector of $\partial B$ at $x_0$.
\begin{lemma}[Hopf lemma] \label{hopf}
Given a ball $B \subset \R^n$, a point $x_0 \in \partial B$ and  a function $u \in C^2(B) \cap C(\overline B)$ such that
\[
 \text{(i)}\,u(x_0) >u(x) \text{ for all }x \in B, \qquad \text{(ii)}\, Lu \geq 0. 
\]
Furthermore, assume $L$ is uniformly elliptic, $\|\mathbf b\|/\lambda$ with $\mathbf b=(b_1,\dots,b_n)$ and $|c|/\lambda$ are bounded in $B$, and $c \equiv 0$, or $c \leq 0$ and $u(x_0)\geq 0$, or $u(x_0)=0$ and $c$ of arbitrary sign. Then
\[
\liminf_{x\to x_{0}, x\in K(x_{0})\cap B}\frac{u(x_{0})-u(x)}{\Vert x-x_{0}\Vert}>0.
 \]
The analogue holds true for the $\limsup$ by reversing all inequalities involving $u$.
 \end{lemma}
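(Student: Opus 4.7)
The plan is to adapt the classical barrier argument for the Hopf lemma, cf.\ \cite{GT} Lemma 3.4, so that it handles approach along any cone $K(x_0)$ rather than only along the inward normal. Since by the discussion preceding the lemma $K(x_0)\subset K_\delta(x_0)$ for some $\delta\in(0,\pi/2]$, it is enough to establish the estimate with $K(x_0)$ replaced by the symmetric cone $K_\delta(x_0)$. I would introduce the standard auxiliary function
\[
v(x):=e^{-\alpha\|x-y\|^2}-e^{-\alpha R^2}
\]
on the annulus $A:=B_R(y)\setminus\overline{B_{R/2}(y)}$, where $y$ and $R$ are the center and radius of $B$. A direct computation shows
\[
Lv=e^{-\alpha\|x-y\|^2}\bigl(4\alpha^2 a_{ij}(x-y)_i(x-y)_j-2\alpha\,\mathrm{tr}(a_{ij})-2\alpha\,b_i(x-y)_i+c\bigr)-c\,e^{-\alpha R^2},
\]
and, choosing $\alpha$ large enough in terms of the uniform ellipticity bounds and the bounds on $\|\mathbf b\|/\lambda$ and $|c|/\lambda$, the leading term $4\alpha^2\lambda\|x-y\|^2$ dominates on $A$ and gives $Lv\geq 0$ there. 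In the cases $c\leq 0$, $u(x_0)\geq 0$, or $u(x_0)=0$ with $c$ of arbitrary sign, I would reduce to an operator with nonpositive zeroth-order term by working with $w:=u-u(x_0)\leq 0$ and absorbing $c^+$ into the inequality $Lw\geq 0$, so that the comparison principle of Theorem \ref{weak} applies. Since $u(x_0)>u$ strictly on the compact set $\partial B_{R/2}(y)$, one picks $\epsilon>0$ so small that $u+\epsilon v\leq u(x_0)$ on $\partial A$, and the comparison principle then yields $u(x_0)-u(x)\geq \epsilon\, v(x)$ for all $x\in\overline{A}$.

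The crucial new step is a uniform lower bound for $v(x)/\|x-x_0\|$ as $x\to x_0$ inside the cone. By definition of $K_\delta(x_0)$, any $x\in K_\delta(x_0)$ satisfies
\[
\langle x-x_0,\,y-x_0\rangle=\|x-x_0\|\,\|y-x_0\|\cos\vartheta \geq\|x-x_0\|\,R\sin\delta,
\]
where $\vartheta\leq\pi/2-\delta$ is the angle between $x_0-x$ and $x_0-y$. Expanding
\[
\|x-y\|^2=\|x-x_0\|^2-2\langle x-x_0,y-x_0\rangle+R^2
\]
yields $R^2-\|x-y\|^2\geq 2R\sin\delta\,\|x-x_0\|-\|x-x_0\|^2$. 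Using the elementary bound $v(x)\geq\alpha e^{-\alpha R^2}(R^2-\|x-y\|^2)$ (from $e^t-1\geq t$), I obtain
\[
\frac{v(x)}{\|x-x_0\|}\geq \alpha e^{-\alpha R^2}\bigl(2R\sin\delta-\|x-x_0\|\bigr),
\]
whose $\liminf$ as $x\to x_0$ is $2\alpha R\sin\delta\,e^{-\alpha R^2}>0$. Combining this with $u(x_0)-u(x)\geq\epsilon v(x)$ gives the desired strict positivity. The $\limsup$ version follows by applying the argument to $-u$, with all inequalities in the hypothesis reversed.

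The main obstacle I anticipate is not the cone geometry, which is essentially a one-line trigonometric computation, but rather the careful bookkeeping required when $c$ does not vanish: one must simultaneously ensure that $Lv\geq 0$ on the annulus (unproblematic because $|c|/\lambda$ is bounded) and that the comparison principle is applied to an operator whose zeroth-order coefficient is nonpositive. The three allowed scenarios for $c$ and $u(x_0)$ are precisely those for which the reduction $u\mapsto u-u(x_0)$, combined with the absorption of $c^+$ into the subsolution inequality (using the sign of $u-u(x_0)$), produces an operator satisfying the hypotheses of Theorem \ref{weak}, and one must invoke each reduction at the correct point.
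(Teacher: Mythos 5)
Your proof is correct and follows the same overall strategy as the paper: the exponential barrier $v(x)=e^{-\alpha\|x-y\|^2}-e^{-\alpha R^2}$ on an annulus, the comparison principle applied after absorbing $c^+$ into the operator (exactly the paper's trick with $(L-c^+)$), and then a geometric estimate showing that $v$ decays at least linearly as $x\to x_0$ inside the cone. The one place you genuinely deviate is the final cone estimate: the paper isolates this as a separate Lemma (Lemma \ref{auxhopf}) proved via the mean value theorem applied to $h(t)=e^{-\alpha\|x_0+t(x-x_0)\|^2}$, whereas you obtain the same lower bound more directly from the elementary inequality $e^t-1\geq t$ combined with the cosine bound $\langle x-x_0,y-x_0\rangle\geq R\sin\delta\,\|x-x_0\|$ for $x\in K_\delta(x_0)$; your route avoids the intermediate MVT point $\zeta$ and is, if anything, slightly cleaner while giving an explicit constant $2\alpha R\sin\delta\,e^{-\alpha R^2}$.
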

First we prove an auxiliary lemma that contains all what is needed to adjust the proof of  
 Lemma $3.4.$ in \cite{GT}.
\begin{lemma} \label{auxhopf}
Given an annular region $R=B_r(0)\setminus \overline B_\rho(0) \subset \R^n$, $r > \rho$, an $\alpha>0$, and a point $x_0 \in \partial B_r(0)$.\\
Assume $u \in C(\overline R) \cap C^2(R)$ and that for an $\varepsilon >0$ holds
\[
u(x_0)-u(x) \geq \varepsilon (e^{-\alpha \|x\|^2}-e^{-\alpha {r}^2}).
\]
Then there exists for every $0 < \delta \leq \frac{\pi}{2} $ an $ \varepsilon'>0$  such that we have for any $x \in K_\delta(x_0) \cap R \cap B_{  \delta'}(x_0)$ with $  \delta' = \frac{1}{2}r \cos (\frac{\pi}{2}-\delta)$
\begin{equation} \label{hopfhilfe}
\frac{u(x_{0})-u(x)}{\Vert x-x_{0}\Vert} \geq \varepsilon' \varepsilon >0.
\end{equation}
\end{lemma}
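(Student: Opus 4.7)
The plan is to decouple the hypothesis from the geometry of the cone: since the bound
$u(x_0)-u(x)\geq\varepsilon(e^{-\alpha\|x\|^2}-e^{-\alpha r^2})$ is already given, it will suffice to show
\[
\frac{e^{-\alpha\|x\|^2}-e^{-\alpha r^2}}{\|x-x_0\|}\;\geq\;\varepsilon'(\alpha,r,\delta)\;>\;0
\]
uniformly on $K_\delta(x_0)\cap R\cap B_{\delta'}(x_0)$, and then multiply through by $\varepsilon$ to obtain \eqref{hopfhilfe}. Thus I only need a quantitative lower bound on $r^2-\|x\|^2$ in terms of $\|x-x_0\|$ for $x$ in the relevant cone-ball intersection.

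The core step is a purely geometric calculation. Since $\|x_0\|=r$, expanding $\|x_0-(x_0-x)\|^2$ gives the identity
\[
r^2-\|x\|^2\;=\;2\langle x_0-x,x_0\rangle-\|x_0-x\|^2.
\]
Unfolding the definition of $K_\delta(x_0)$ with center $y=0$ (so that $x_0-y=x_0$ and $\|x_0-y\|=r$) yields
\[
\langle x_0-x,x_0\rangle\;\geq\;r\sin\delta\cdot\|x_0-x\|,
\]
because $\cos(\pi/2-\delta)=\sin\delta$. Substituting produces $r^2-\|x\|^2\geq 2r\sin\delta\,\|x_0-x\|-\|x_0-x\|^2$. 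Here the precise choice $\delta'=\tfrac12 r\cos(\pi/2-\delta)=\tfrac12 r\sin\delta$ is exactly what is needed to absorb the quadratic error into half of the linear term, leaving
\[
r^2-\|x\|^2\;\geq\;\tfrac{3}{2}r\sin\delta\cdot\|x-x_0\|.
\]

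Finally, since the right-hand side is nonnegative throughout the region, I would apply the elementary convexity inequality $e^t-1\geq t$ for $t\geq 0$ to $t=\alpha(r^2-\|x\|^2)$ in the factorization
\[
e^{-\alpha\|x\|^2}-e^{-\alpha r^2}\;=\;e^{-\alpha r^2}\bigl(e^{\alpha(r^2-\|x\|^2)}-1\bigr)\;\geq\;\tfrac{3}{2}\alpha r\,e^{-\alpha r^2}\sin\delta\cdot\|x-x_0\|.
\]
Dividing by $\|x-x_0\|$ and reinstating the factor $\varepsilon$ gives \eqref{hopfhilfe} with the explicit constant $\varepsilon':=\tfrac{3}{2}\alpha r\,e^{-\alpha r^2}\sin\delta>0$, which depends only on $\alpha$, $r$ and $\delta$ as required. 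I do not anticipate a genuine obstacle: the statement is essentially a calculational lemma, and the only subtle point is recognizing that the specific choice of $\delta'$ in the hypothesis is tailored precisely to dominate the quadratic term in the identity for $r^2-\|x\|^2$ by half of the linear term, which is what the second step above makes transparent.
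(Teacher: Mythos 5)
Your proof is correct. The geometric core is the same as the paper's: you use the cone condition to get $\langle x_0-x,x_0\rangle\geq r\sin\delta\,\|x_0-x\|$ and the choice of $\delta'$ to absorb the quadratic term, exactly as the paper does. The difference lies in how the exponential is handled: the paper applies the mean value theorem to $h(t)=e^{-\alpha\|x_0+t(x-x_0)\|^2}$ on $[0,1]$ and then bounds the intermediate factor $e^{-\alpha\|x_0+\zeta(x-x_0)\|^2}\geq e^{-\alpha r^2}$, whereas you first isolate $r^2-\|x\|^2$ via the algebraic identity $r^2-\|x\|^2=2\langle x_0-x,x_0\rangle-\|x_0-x\|^2$ (using $\|x_0\|=r$) and then invoke $e^t-1\geq t$ for $t\geq 0$; the nonnegativity $t=\alpha(r^2-\|x\|^2)\geq 0$ is guaranteed because $x\in R\subset B_r(0)$. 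Your route avoids differentiating a composite and gives the (slightly larger) explicit constant $\varepsilon'=\tfrac{3}{2}\alpha r e^{-\alpha r^2}\sin\delta$ versus the paper's $\alpha r e^{-\alpha r^2}\sin\delta$; both are equally valid since any positive constant suffices. Neither approach uses the $C^2$ regularity of $u$ or the inner radius $\rho$, which only matter in the subsequent proof of the Hopf lemma itself.
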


\begin{proof}
Fix $x \in K_\delta(x_0) \cap R \cap B_{  \delta'}(x_0)$ and define $h(t):=e^{-\alpha\|x_0+t(x-x_0) \|^2}$, $0\leq t \leq 1$. Then there exists $0< \zeta <1$ such that 
\begin{align*}
 e^{-\alpha \|x\|^2}-e^{-\alpha {r}^2}&= h(1)-h(0)=h'(\zeta) \\
      &\hspace{-2.3cm}  = -2\alpha (x-x_{0})\cdot(x_{0}+\zeta(x-x_{0})) e^{-\alpha \|x_{0}+\zeta(x-x_{0})\|^2}\\
 & \hspace{-2.3cm}= 2\alpha e^{-\alpha \|x_{0}+\zeta(x-x_{0})\|^2}\|x-x_0\|( \|x_{0}\| \cos  \measuredangle(x_{0}, x_{0}-x)   -\zeta \|x-x_{0}\|) \\
& \hspace{-2.3cm} \geq 2\alpha e^{-\alpha r^2}\|x-x_0\|( r\cos( \pi/2-\delta) - r/2 \cos (\pi/2-\delta))\\
& \hspace{-2.3cm} \geq \alpha e^{-\alpha r^2}r \cos( \pi/2-\delta)\|x-x_0\|=\varepsilon' \|x-x_0\| >0.
 \end{align*} 
Inequality \eqref{hopfhilfe} follows immediately. 
\end{proof}
Now, we sketch the proof of Lemma \ref{hopf}, following closely the proof of Lemma $3.4.$ in \cite{GT}.
\begin{proof}[Proof of Lemma \ref{hopf}] 
Assume w.l.o.g. $B=B_r(0)$. Take $0 < \rho < r$ and define $v(x):=e^{-\alpha \|x\|^2}-e^{-\alpha {r}^2}$ in $\overline R$ with $R:= B_r(0) \setminus \overline B_\rho(0)$. We compute
\begin{align*}
 (L-c^+)v(x)& = e^{-\alpha \|x\|^2}[4 \alpha^2 a_{ij}x_i x_j -2 \alpha(a_{ii}+b_ix_i)] - c^- v\\
           & \geq e^{-\alpha r^2}[4 \alpha^2 \lambda(x)\rho^2 -2 \alpha(a_{ii}+\|{\bf b}\|r) -c^-] \geq 0
\end{align*}
in $R$ for $\alpha$ large enough because $a_{ii}/\lambda$, $\|\mathbf b\| /\lambda$ and $c/\lambda$ are bounded in $B$. ($c^+:=\max\{0,c\}$, $c^-:=-\min\{0,c\}$)\\
Furthermore, the assumptions imply that $u-u(x_0)+\varepsilon v \leq 0$ on $\partial R$ for an $\varepsilon >0$ and $(L-c^+)(u-u(x_0)+\varepsilon v) \geq - c^+ u+ c^- u(x_0) \geq 0$ in $R$.\\
Therewith, the comparison principle  yields $u-u(x_0)+\varepsilon v \leq 0$ in the whole of $R$ and Lemma \ref{auxhopf} finishes the proof.  
\end{proof}

Because of their importance in our method, we briefly cite the comparison and strong maximum principle from \cite{GT}, Theorem $3.3$ and Theorem $3.5$, compare also the comments after Theorem $3.1$ and Theorem $3.5$ for the assumptions on the coefficients. 
\begin{theorem}[Comparison principle] \label{weak}
Let $u,v \in C^2(\Omega) \cap C(\overline \Omega)$, let $L$ be elliptic with $c\leq 0$ and $\|\mathbf b\|/\lambda$ locally bounded such that $L u \geq L v$ in $\Omega$ and $u \leq v$ on $\partial \Omega$. Then $u \leq v$ in $\Omega$.
\end{theorem}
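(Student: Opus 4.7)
The plan is to reduce the statement to the weak maximum principle for the first-order operator $\bar L := a_{ij}D_{ij} + b_i D_i$ (i.e., $L$ with the $c$-term dropped). Set $w := u - v$. By linearity $Lw = Lu - Lv \ge 0$ in $\Omega$ and $w \le 0$ on $\partial\Omega$, so it suffices to show $w \le 0$ throughout $\Omega$. Assume for contradiction that $\sup_{\Omega} w > 0$; since $w \in C(\overline\Omega)$ and $w \le 0$ on $\partial\Omega$, the supremum is attained in the interior, and the open superlevel set $\Omega^+ := \{x \in \Omega : w(x) > 0\}$ is nonempty. Its boundary $\partial\Omega^+$ decomposes into the interior portion $\{w = 0\} \cap \Omega$ and the portion lying in $\partial\Omega$ where $w \le 0$, so $w \le 0$ on all of $\partial\Omega^+$.

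On $\Omega^+$ one has $c \le 0$ and $w > 0$, hence $-cw \ge 0$ and $\bar L w = Lw - cw \ge 0$. I would then establish the weak maximum principle for $\bar L$ on $\Omega^+$ via the classical exponential barrier $h(x) := e^{\gamma x_1}$. A direct calculation gives $\bar L h = (\gamma^2 a_{11} + \gamma b_1)e^{\gamma x_1}$; using $a_{11} = e_1^T (a_{ij}) e_1 \ge \lambda$ and $|b_1|/\lambda \le \|\mathbf b\|/\lambda$, the latter being bounded on the relevant compact piece of $\Omega^+$ by the local-boundedness hypothesis together with the boundedness of $\Omega$, I can choose $\gamma$ so large that $\bar L h \ge \lambda\gamma(\gamma - \|\mathbf b\|/\lambda)e^{\gamma x_1} > 0$ there. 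Consequently $\bar L (w + \varepsilon h) > 0$ for every $\varepsilon > 0$, and $w + \varepsilon h$ cannot attain an interior maximum in $\Omega^+$: at such a point $D(w+\varepsilon h)=0$ and $D^2(w+\varepsilon h)$ is negative semidefinite, so $a_{ij}D_{ij}(w+\varepsilon h) \le 0$ by positive semidefiniteness of $(a_{ij})$, contradicting $\bar L(w+\varepsilon h) > 0$. Thus $\sup_{\Omega^+}(w + \varepsilon h) = \sup_{\partial\Omega^+}(w + \varepsilon h)$, and sending $\varepsilon \downarrow 0$ yields $\sup_{\Omega^+} w \le \sup_{\partial\Omega^+} w \le 0$, contradicting $w > 0$ on $\Omega^+$.

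The main obstacle is the zeroth-order term. Evaluating $Lw$ naively at an interior positive maximum $x_0$ yields only $Lw(x_0) \le 0$, not strictly negative, since $D^2 w(x_0) \le 0$ combined with $(a_{ij}) \ge 0$ forces $a_{ij}D_{ij}w(x_0) \le 0$, while $c(x_0)w(x_0) \le 0$ as well, so no contradiction with $Lw \ge 0$ arises. The key idea is therefore twofold: first, pass to $\Omega^+$ in order to exploit $w > 0$ and so turn $-cw$ into a nonnegative quantity, effectively reducing to the case $c \equiv 0$; and second, add the barrier $\varepsilon h$ with $\bar L h > 0$ to upgrade the nonstrict $\bar L w \ge 0$ to the strict $\bar L(w + \varepsilon h) > 0$, which genuinely precludes interior maxima. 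A minor technical wrinkle is ensuring that $\|\mathbf b\|/\lambda$ is actually bounded on the region where the barrier is applied; this is handled by the boundedness of $\Omega$ and the local-boundedness hypothesis, possibly after mildly shrinking $\Omega^+$ to a sublevel set $\{w \ge \delta\} \Subset \Omega$ and then letting $\delta \downarrow 0$.
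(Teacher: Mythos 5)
Your proof is correct. Note, however, that the paper does not prove Theorem \ref{weak} at all: it is quoted verbatim from Gilbarg--Trudinger (Theorem 3.3, together with the remark after Theorem 3.1 permitting $\|\mathbf b\|/\lambda$ to be only locally bounded), so there is no in-paper argument to compare against. What you have written is precisely the standard [GT] argument --- pass to $w=u-v$ and the superlevel set $\{w>0\}$ to absorb the zeroth-order term via $-cw\ge 0$, then run the $e^{\gamma x_1}$ barrier against the first-order operator --- and your observation that one should really work on $\{w>\delta\}\Subset\Omega$ and let $\delta\downarrow 0$ is exactly the right way to reconcile the barrier estimate with the merely local boundedness of $\|\mathbf b\|/\lambda$.
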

\begin{theorem}[Strong maximum principle] \label{strong}
Let $u \in C^2(\Omega)$, let $L$ be locally uniformly elliptic with $c\leq 0$ and $\|\mathbf b\|/\lambda$, $|c|/\lambda $ locally bounded such that $L u=0$ in $\Omega$. If $u$ achieves a non-negative maximum or non-positive minimum in the interior of $\Omega$, then $u$ is constant.
\end{theorem}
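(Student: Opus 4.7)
The plan is to argue by contradiction, reducing the strong maximum principle to the Hopf boundary-point lemma (Lemma \ref{hopf}) via a standard ``ball from inside'' construction. Suppose $u$ attains its maximum $M \geq 0$ at some interior point of $\Omega$ but is not constant. Set $\Omega^{+} := \{x \in \Omega : u(x) = M\}$ and $\Omega^{-} := \{x \in \Omega : u(x) < M\}$. By continuity, $\Omega^{+}$ is relatively closed in $\Omega$ and $\Omega^{-}$ is open; by assumption both are non-empty, and since $\Omega$ is connected (domains are connected by our notation convention), $\Omega^{-}$ cannot also be closed in $\Omega$. Hence there exists some $z \in \Omega \cap \partial \Omega^{-}$, and every neighbourhood of $z$ meets $\Omega^{+}$.

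Next I would produce a ball suitable for the Hopf lemma. Pick $y \in \Omega^{-}$ with $\|y - z\| < \tfrac{1}{2}\mathrm{dist}(z, \partial \Omega)$, and let $B = B_{r}(y)$ be the largest open ball centered at $y$ contained in $\Omega^{-}$. By construction $r \leq \|y - z\| < \tfrac{1}{2}\mathrm{dist}(z, \partial\Omega)$, so $\overline{B} \subset \Omega$, and by maximality of $r$ some point $x_{0} \in \partial B$ satisfies $u(x_{0}) = M$ while $u < M$ on $B$. Because $\overline{B}$ is a compact subset of $\Omega$, the local uniform ellipticity of $L$ together with local boundedness of $\|\mathbf{b}\|/\lambda$ and $|c|/\lambda$ upgrade to genuine uniform ellipticity and genuine boundedness on $\overline{B}$. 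Thus all hypotheses of Lemma \ref{hopf} are met: $u \in C^{2}(B) \cap C(\overline{B})$, $u(x_{0}) > u(x)$ for $x \in B$, $Lu = 0 \geq 0$, $c \leq 0$, and $u(x_{0}) = M \geq 0$. The Hopf lemma therefore yields
\[
\liminf_{x \to x_{0},\, x \in K(x_{0}) \cap B} \frac{u(x_{0}) - u(x)}{\|x - x_{0}\|} > 0,
\]
where $K(x_{0})$ may be taken as the cone symmetric about the inner normal to $\partial B$ at $x_{0}$.

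On the other hand, $x_{0} \in \Omega$ is an interior maximum of $u$ on $\Omega$, so $\nabla u(x_{0}) = 0$, and since $u \in C^{2}$ a first-order Taylor expansion gives
\[
\frac{u(x_{0}) - u(x)}{\|x - x_{0}\|} = -\nabla u(x_{0}) \cdot \frac{x - x_{0}}{\|x - x_{0}\|} + o(1) = o(1)
\]
as $x \to x_{0}$, contradicting the strict positivity above. Hence $u$ is constant. For the non-positive minimum case, apply the argument just proved to $\tilde{u} := -u$: it satisfies $L\tilde{u} = 0$ with the same coefficients (so $c \leq 0$ is preserved), and achieves a non-negative maximum $-m \geq 0$ at the same interior point, so $\tilde{u}$—and hence $u$—is constant.

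The main obstacle is purely the geometric/measure-theoretic step of producing the ball $B$ whose closure lies in $\Omega$, touches $\Omega^{+}$ at exactly one accessible boundary point, and on which the operator $L$ has the coefficient control demanded by Lemma \ref{hopf}; once that ball is in hand the Hopf lemma delivers the contradiction in one line. The linearisation and admissibility machinery earlier in the paper play no role here—only the classical inner-ball trick and the generalized Hopf lemma proved in the appendix.
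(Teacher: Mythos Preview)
The paper does not actually prove this theorem: it is stated in the appendix purely as a citation from \cite{GT}, Theorem~3.5, for the reader's convenience, with no accompanying argument. Your proof is correct and is precisely the classical derivation of the strong maximum principle from the Hopf lemma (the same route taken in \cite{GT} itself), so in that sense it matches the source the paper defers to.
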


\section{Existence of a ball} \label{exball}
 
The proof of Corollary \ref{intro} is based on the observation that we can always find suitable balls, touching the points of $E$, if $E$ is a subset of $C^{1,1}$-submanifold. This observation is justified by the following proposition.
\begin{proposition} \label{ball}
Given a $d$-dimensional submanifold $M \subset \mathbb R^n$ of class $C^{1,1}$, a point $p \in M$, a neighborhood $\tilde U_p$ of $p$, an affine hyperplane $\tilde T$ through $p$ with $T_p M \subset \tilde T$ for the tangent space $T_p M$ of $M$, $\dim M=l<n$,  and  ${\bf n}_1$ one of the two unit normal vectors of $\tilde T$ at $p$.\\
Denote by $B_r$ the balls of radius $r$ with $p \in \partial B_r$ and $T_p \partial B_r = \tilde T$ such that ${\bf n}_1$ is the inner normal vector of $\partial B_r$ at $p$.\\
Then there exists an $r_0 > 0$ such that  $\overline B_r \subset \tilde U _p$ and $\overline B_r \cap M = \{p\}$ for all $r \leq r_0$. 
\end{proposition}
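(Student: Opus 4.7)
The plan is to reduce the claim to a local graph representation of $M$ near $p$ and then exploit the $C^{1,1}$-regularity to control the distance from the center of $B_r$ to nearby points of $M$. After translating $p$ to the origin, observe that $T_p M \subset \tilde T$ together with ${\bf n}_1 \perp \tilde T$ forces ${\bf n}_1 \in (T_p M)^\perp$. The $C^{1,1}$-structure of $M$ then yields an open neighborhood $W \subset \tilde U_p$ of $p$, an open set $V \subset T_p M$ containing $0$, and a $C^{1,1}$-map $\phi \colon V \to (T_p M)^\perp$ with $\phi(0)=0$ and $D\phi(0)=0$ such that $M \cap W = \{(x',\phi(x')) : x' \in V\}$.

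Since $D\phi$ is Lipschitz with $D\phi(0)=0$, a Taylor-type estimate supplies constants $C>0$ and $\eta>0$ with $\|\phi(x')\| \le C\|x'\|^2$ for all $x' \in V$ satisfying $\|x'\| \le \eta$; this quadratic bound is the core of the argument. Writing the center of $B_r$ as $c_r = r{\bf n}_1$ and exploiting ${\bf n}_1 \perp T_p M$, for every $q=(x',\phi(x'))\in M\cap W$ I compute
\[
\|q - c_r\|^2 - r^2 \;=\; \|x'\|^2 + \|\phi(x')\|^2 - 2r\langle\phi(x'),{\bf n}_1\rangle \;\ge\; (1-2rC)\,\|x'\|^2
\]
by Cauchy--Schwarz. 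Hence $q \notin \overline{B}_r$ whenever $0<\|x'\|\le\eta$ and $r<1/(2C)$.

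To conclude I would choose $r_0>0$ small enough that (i) $\overline{B}_{r_0}\subset \tilde U_p$ (automatic, since $\overline{B}_r$ collapses to $\{p\}$ as $r\to 0$), (ii) $r_0 < \eta/2$, so that every point of $M\cap\overline{B}_r$ with $r\le r_0$ has $\|x'\|\le 2r \le \eta$ and therefore lies in the graph chart where the quadratic estimate is valid, and (iii) $r_0 < 1/(2C)$. There is no serious obstacle; the only bookkeeping point is the distinction between $T_p M$ and the possibly larger hyperplane $\tilde T$, but since ${\bf n}_1$ remains perpendicular to $T_p M$ the inner product with $x'$ drops out of the computation and everything goes through unchanged. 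The freedom in choosing $\tilde T$ (nontrivial only when $l<n-1$) simply parameterizes a family of admissible touching balls, each of which the same argument accommodates.
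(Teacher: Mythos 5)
Your proof is correct and follows essentially the same strategy as the paper's: write $M$ locally as a $C^{1,1}$ graph over $T_pM$, use the Lipschitz bound on $D\phi$ with $D\phi(0)=0$ to obtain the quadratic estimate $\|\phi(x')\|\le C\|x'\|^2$, and compare this against the (also quadratic) gap between $\tilde T$ and $\partial B_r$ near $p$. Your direct computation of $\|q-c_r\|^2 - r^2$ is a slightly cleaner bookkeeping than the paper's use of the auxiliary height function $\lambda(x')$; the only point worth making explicit is that $r_0$ must also be small enough that $\overline B_{r_0}$ lies inside the chart neighborhood $W$, so that every point of $M\cap\overline B_r$ is indeed of the form $(x',\phi(x'))$.
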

\begin{proof}
We assume w.l.o.g. $p=0$ and choose a cartesian coordinate system $(x_1,\dots, x_n)=x_1  \mathbf{ t}_1+\dots + x_l \mathbf{  t}_l+ x_{l+1} \mathbf n_1+ \dots +x_n \mathbf n_{n-l}$ where $\mathbf t_1,\dots, \mathbf t_l$ is an orthonormal basis of $T_p M$ and $ \mathbf n_1,\dots, \mathbf  n_{n-l}$ an orthonormal basis of the normal space $N_p M$ with $\mathbf n_1$ as above and $ \mathbf n_2,\dots, \mathbf  n_{n-l} \in \tilde T$.\\
In a neighborhood $U =U' \times U'' \subset \tilde U_p$ we can parametrize 
\[
M \cap U =\{(x',\varphi(x'))=:M(x')|\,x'\in U'\}
\]
with $\varphi=(\varphi_1,\dots ,\varphi_{n-l}) \in C^{1,1}(U',U'')$, ($U' \subset \mathbb R^{l}, U'' \subset \mathbb R^{n-l}$). We note that with this choice $|\varphi_1|$ is the distance from $M$ to $\tilde T$.\\
Let $\overline r >0$ be so small that $\overline B_{\overline r} \subset U$. Write for $x\in \mathbb R^{n}$
\[
l(x)=l(x_1,\dots,x_l,x_{l+1},\ldots,x_n)=\|(x_1,\dots,x_l,0,x_{l+2},\ldots,x_n)\|_{\mathbb R^n}.
\]
Define for $0 <r \le \overline r$  the cylinder $Z_r$ and the set $V_r \subset U'$  by 
\[
Z_r:=\{x=(x_1, \dots,x_n)|l(x)\le r\}, \quad V_r:=\{x'\in U'|M(x')\in Z_r\}.
\]
Now, fix $0 <r \le \overline r$. For $x'\in V_r$ let 
\[
P(x'):=P(M(x')):=M(x')-\varphi_1(x'){\bf n}_1
\]
 be the orthogonal projection of $M(x')$ on $\tilde T$ and set
\[
R(x'):=P(x')+ \lambda(x')\mathbf  n_1  \text{ with } \lambda(x')=\min \{\lambda \in \mathbb R|P(x')+\lambda \mathbf n_1\in \overline B_r\}.
\]
We have $|\lambda(x')|=r-\sqrt{r^2-l(M(x'))^{2}}$.\\
Moreover, $\overline B_r \cap M=\{p\}$ if $|\varphi_1(x')|<|\lambda(x')|$ for all $x'\in V_r \setminus \{p'\}$.\\ 
Take $x'\in V_r \setminus \{p'\}$. Set $l_1=\|x'\|_{\mathbb R^l}$ and $l_2=l(M(x'))$. Of course, $l_1\le l_2$.\\
Define $h(s):=\varphi_1(s\frac{x'}{l_1})$ for $0\le s\le l_1$. We have $h \in C^{1,1}[0,l_1]$ and
\[
h(l_1)=\varphi_1(x'),\; h(0)=\varphi_1(0)=0,\; h'(s)=\sum_{i=1}^l \frac{\partial \varphi_1(s\frac{x'}{l_1})}{\partial x_i}\frac{x_i}{l_1},\; h'(0)=0.
\]
Furthermore, $h'(s) \leq cs$ if $c$ is the Lipschitz constant of $\nabla \varphi_1$ in $U'$. Since $l_{1}\le r$ we can conclude if $r\leq r_{0}:=\min \{1/c, \overline r\}$
\begin{align*}
 |\varphi_{1}(x')| &= |h(l_1)|=|h(0)+\int_0^{l_1} h'(s)\der s|\le \int_{0}^{l_1}cs \der s =\frac{c}{2}l_1^2  \\ 
                  &\le \frac{1}{2}\frac{l_1^2}{r} < r-\sqrt{r^2-l_1^2}\le r-\sqrt{r^2-l_2^2} = |\lambda(x')|.    
\end{align*}  
\end{proof}

\begin{remark}
Note that this is a generalization of the (interior/exterior) sphere condition, i.e. if $M$ is the boundary of a $C^{1,1}$-domain in $\mathbb R^n$.\\
The regularity $C^{1,1}$ is optimal as the graphs of the $C^{1,\alpha}$-functions $f(t)=|t|^{1+\alpha}$, $0 < \alpha < 1$, show.\\
If one takes the above proposition for an $(n-1)$-dimensional submanifold for granted, it is also possible to prove it by extending $M$ around $p$ locally to an $(n-1)$-dimensional submanifold with the desired unit normal vector. 
\end{remark}

\subsection*{Acknowledgment}
The first author wants to thank her advisor Friedmar Schulz for suggesting the topic, giving helpful remarks and pointing out the references \cite{SW}, \cite{WZ}, \cite{U} and \cite{S}  to her.
This paper will be part of the PhD thesis of the first author.\\
We also thank Anna Dall'Acqua for carefully reading through the paper and providing us with valuable advice and helpful comments.

\end{document}